\theoremstyle{plain}
 \newtheorem{theorem}{Theorem}[section]
 \newtheorem{proposition}[theorem]{Proposition}
 \newtheorem{lemma}[theorem]{Lemma}
 \newtheorem{corollary}[theorem]{Corollary}
\theoremstyle{definition}
 \newtheorem{definition}[theorem]{Definition}
 \newtheorem*{examples}{Examples}
\theoremstyle{remark}
 \newtheorem{remark}[theorem]{Remark}
\begin{document}
\title[Stable cohomology]{Isoclinism and stable cohomology of wreath products}

\author[Bogomolov]{Fedor Bogomolov$^1$}
\address{F. Bogomolov, Courant Institute of Mathematical Sciences\\
251 Mercer St.\\
New York, NY 10012, U.S.A., \emph{and}}
\address{Laboratory of Algebraic Geometry, GU-HSE\\
7 Vavilova Str.\\
Moscow, Russia, 117312}
\email{bogomolo@courant.nyu.edu}
\author[B\"ohning]{Christian B\"ohning$^2$}
\address{Christian B\"ohning, Fachbereich Mathematik der Universit\"at Hamburg\\
Bundesstra\ss e 55\\
20146 Hamburg, Germany}
\email{christian.boehning@math.uni-hamburg.de}

\thanks{$^1$ Supported by NSF grant DMS-1001662 and by AG Laboratory GU- HSE grant RF government ag. 11 11.G34.31.0023}
\thanks{$^2$ Supported by the German Research Foundation (Deutsche Forschungsgemeinschaft) through Heisenberg-Stipendium BO 3699/1-1}

\newcommand{\PP}{\mathbb{P}} 
\newcommand{\QQ}{\mathbb{Q}} 
\newcommand{\ZZ}{\mathbb{Z}} 
\newcommand{\CC}{\mathbb{C}} 
\newcommand{\rmprec}{\wp}
\newcommand{\rmconst}{\mathrm{const}}
\newcommand{\xycenter}[1]{\begin{center}\mbox{\xymatrix{#1}}\end{center}} 
\newboolean{xlabels} 
\newcommand{\xlabel}[1]{ 
                        \label{#1} 
                        \ifthenelse{\boolean{xlabels}} 
                                   {\marginpar[\hfill{\tiny #1}]{{\tiny #1}}} 
                                   {} 
                       } 
\setboolean{xlabels}{false} 

\

\begin{abstract}
Using the notion of isoclinism introduced by P. Hall for finite $p$-groups, we show that many important classes of finite $p$-groups have stable cohomology detected by abelian subgroups, see Theorem \ref{tMain}. Moreover, we show that the stable cohomology of the $n$-fold wreath product $G_n = \ZZ/p \wr \dots \wr \ZZ/p$ of cyclic groups $\ZZ/p$ is detected by elementary abelian $p$-subgroups and we describe the resulting cohomology algebra explicitly. Some applications to the computation of unramified and stable cohomology of finite groups of Lie type are given. 
\end{abstract}

\maketitle

\section{Introduction}\xlabel{sIntroduction}

Recall that for a prime $p$ and finite group $G$, the stable cohomology $H^*_{\mathrm{s}} (G, \: \ZZ /p )$ is defined as the quotient $H^*(G, \: \ZZ /p ) / N_{G, p}$ where, for some generically free $G$-representation $V$ with open part $V^L$ with free $G$-action, $N_{G, p}$ is the kernel of the map
\[
H^* (G, \: \ZZ /p )  \to  \varinjlim_U H^i (U/G , \: \ZZ /p\ZZ )\, ,
\]
the direct limit running over all nonempty Zariski open subsets $U\subset V^L$. In fact, $N_{G, p}$ is independent of the choice of $V$. In $H^*_{\mathrm{s}} (G; \: \ZZ/p)$ we have the subring of unramified elements $H^*_{\mathrm{nr}} (G, \: \ZZ/p)$; these play a vital role in the study of birational properties of generically free linear quotients $V/G$ and varieties $X$ in general, see \cite{Bogo93} for definitions and background. 

\

The object of this paper is to make $H^*_{\mathrm{s}}(G, \: \ZZ/p)$ (and $H^*_{\mathrm{nr}} (G, \: \ZZ/p)$) amenable to effective computation for rather important and large classes of groups. The development of the theory formally parallels that in the ordinary cohomology of finite groups: iterated wreath products of finite cyclic $p$-groups play an important part because they occur as building blocks of Sylow subgroups of a variety of classes of finite groups, in particular many finite groups of Lie type. In ordinary group cohomology, systematic studies along these lines were made in the famous papers by Quillen \cite{Quill71a}, \cite{Quill71b}, \cite{Quill71c}. Our treatment uses the notion of isoclinism of finite $p$-groups introduced by Hall in the paper \cite{Hall}. It turns out that generically free linear quotients by isoclinic groups are stably birational, see Theorem \ref{tComparisonIsoclinicToric}; this answers a question raised in \cite{HKK} (Question 1.11) in the affirmative. A partial result in this direction has been obtained previously in \cite{Mor} where it is proven that isoclinic groups have isomorphic unramified cohomology in degree $2$. Moreover, the stable cohomology of isoclinic groups share some important properties, see Proposition \ref{ptoroidal}, namely if $G_1$ and $G_2$ are isoclinic, then the stable cohomology of $G_1$ is detected by abelian subgroups if and only if the same is true for $G_2$. 
\

Here is a further outline of the contents of this paper: in Sections \ref{storoidal} and \ref{sIsoclinism} we prove the afore-mentioned results for isoclinic groups and, as an ingredient of the proof, we show that the notions of isoclinism and being toroidally related coincide for group extensions.\\
Section \ref{sWreathProducts} then contains the application of the results on isoclinic groups to the computation of the stable cohomology of iterated wreath products of groups $\ZZ/p$. The main results are Theorem \ref{tMain} and its Corollary \ref{cWreathAb} saying that for the stable cohomology of these groups we have detection by abelian subgroups. Theorem \ref{tWreathElAb} shows that for $G_n = \ZZ/p \wr \dots \wr \ZZ/p$ ($n$-factors) we even get detection by elementary abelian $p$-subgroups. This enables us to determine the structure of the stable cohomology algebra of $G_n$ in Theorem \ref{tFinerStrWreath} completely. In Section \ref{sLie} we give some applications of the preceding results to the computation of the unramified and stable cohomology of some finite groups of Lie type. A more intensive treatment will be given elsewhere. In particular, we recover (and extend) results from \cite{BPT} by this method.

\section{Toroidally related extensions}\xlabel{storoidal}
\begin{definition}\xlabel{dtoroidal}
Let $G$ be a finite group and let $A_1, \: A_2$ be finite abelian groups. If $e_1\, :\, 1 \to A_1 \to G_1 \to G \to 1$ and $e_2\, :\, 1\to A_2 \to G_2 \to G \to 1$ are two central extensions of $G$, we call them (resp. $G_1$ and $G_2$) \emph{toroidally related} if there is an algebraic torus $T\simeq (\CC^{\ast })^r$ together with embeddings $i_k\, :\, A_k \hookrightarrow T$, $k=1, \: 2$, such that 
the images of $e_1 \in H^2 (G, A_1)$ and $e_2 \in H^2 (G, A_2)$ in the cohomology $H^2 (G, \: T)$ coincide.
\end{definition}

\begin{examples}
(1) If $G$ is abelian, then one knows for the group homology $H_2 (G, \: \ZZ ) = \Lambda^2 G$, and the universal coefficient sequence for an arbitrary $G$-module $M$ reads
\[
0 \to \mathrm{Ext}^1 (G, \: M) \to H^2 (G, \: M) \stackrel{c}{\to} \mathrm{Hom} (\Lambda^2 G, \: M) \to 0 \, .
\]
For $M= \mathbb{Q}/\mathbb{Z}$, we have $\mathrm{Ext}^1 (G, \: \mathbb{Q}/\mathbb{Z})=0$ as $\mathbb{Q}/\mathbb{Z}$ is an injective $\ZZ$-module; hence an isomorphism
\[
c\, :\, H^2 (G, \: \mathbb{Q}/\mathbb{Z}) \stackrel{\sim }{\to } \mathrm{Hom} (\Lambda^2 G, \: \mathbb{Q}/\mathbb{Z})\, .
\]
More concretely, for any $M=A$ with trivial action, the map $c\, :\, H^2 (G, \: A) \to \mathrm{Hom} (\Lambda^2 G, \: A)$ can be described as follows: to a central extension 
\[
1 \to A \stackrel{i}{\to } \tilde{G} \stackrel{ \pi }{\to } G \to 1
\]
one associates the $A$-valued skew-form on $G$ given by the commutator: 
\[
i (c (g, \: h ) ) = \tilde{g}\tilde{h}\tilde{g}^{-1} \tilde{h}^{-1}\, , \; \mathrm{where}\; \pi (\tilde{g}) = g, \: \pi (\tilde{h}) = h \, .
\]
The kernel $\mathrm{Ext}^1 (G, \: A)$ of $c$ in this more general set-up can be identified with the abelian extensions $\mathcal{E}_{\mathrm{ab}} (G, \: A)$ of $G$ by $A$. From the short exact sequence of coefficients
\[
0 \to \mathbb{Q}/\mathbb{Z} \to \mathbb{C}^{\ast} \to \mathbb{C}^{\ast } /(\mathbb{Q} / \mathbb{Z})Ê\to 1
\]
we get $H^2 (G, \: \mathbb{Q}/\mathbb{Z}) \simeq H^2 (G, \: \mathbb{C}^{\ast })$ because $\mathbb{C}^{\ast }/(\mathbb{Q} / \mathbb{Z})$ is an infinite-dimensional vector space over $\mathbb{Q}$ (with ``vector addition" = multiplication and ``scalar multiplication"=exponentiation; this uses the algebraic closedness of $\CC$), whence $H^i (G, \: \mathbb{C}^{\ast } /(\mathbb{Q} / \mathbb{Z})) =0$ for $i>0$. If $i\, :\, A \hookrightarrow (\CC^{\ast })^r$ is an embedding, the diagram
\[
\begin{CD}
H^2 (G, \: A ) @>{c}>> \mathrm{Hom} (\Lambda^2 G, \: A)\\
@V{i^{\ast}}VV             @V{i\circ (-)}VV\\
H^2 (G, \: (\CC^{\ast })^r ) @>{c}>> \mathrm{Hom} (\Lambda^2 G, \: (\CC^{\ast })^r) \, .
\end{CD}
\]
Thus, for fixed abelian $G$ and $A$, two central extensions $e_1$ and $e_2$ of $G$ by $A$ are toroidally related if and only if they give the same skew form in $\mathrm{Hom} (\Lambda^2 G, \: A)$, or equivalently, there difference $e_1-e_2\in H^2 (G, \: A)$ represents the class of an abelian extension. If $A$ and $G$ are elementary abelian $p$-groups, this is the same as saying that $e_1-e_2$ lie in the subspace of $H^2 (G, \: A)$ spanned by the Bocksteins.

\

(2) In general, when $G$ is not necessarily abelian, we still have a universal coefficient sequence
\[
0 \to \mathrm{Ext}^1_{\mathbb{Z}} (H_1 (G, \: \ZZ ), \: A) \to H^2 (G, \: A ) \to \mathrm{Hom} (H_2 (G, \: \ZZ ) , \: A) \to 0
\]
\end{examples}
and, for an embedding $i\, :\,  A\hookrightarrow (\CC^{\ast })^r$ a diagram
\[
\begin{CD}
H^2 (G, \: A ) @>{c}>> \mathrm{Hom} (H_2 (G, \: \ZZ ), \: A)\\
@V{i^{\ast}}VV             @V{i\circ (-)}VV\\
H^2 (G, \: (\CC^{\ast })^r ) @>{c}>> \mathrm{Hom} (H_2 (G, \: \ZZ ), \: (\CC^{\ast })^r) \, 
\end{CD}
\]
where the bottom horizontal arrow is again an isomorphism. Thus extensions $e_1$ and $e_2$ are toroidally related in this case if and only if their difference $e_1-e_2$ is in the subspace $\mathrm{Ext}^1_{\mathbb{Z}} (H_1 (G, \: \ZZ ), \: A)$. Now $H_1 (G, \: \ZZ ) = G^{\mathrm{ab}}$ and the map $\mathrm{Ext}^1_{\mathbb{Z}} (G^{\mathrm{ab}}, \: A) \to H^2 (G, \: A ) $ factors 
\[
\mathrm{Ext}^1_{\mathbb{Z}} (G^{\mathrm{ab}}, \: A) \to H^2 (G^{\mathrm{ab}}, \: A) \stackrel{p^{\ast }}{\to } H^2 (G, \: A )
\]
where $p\, :\, G\to G^{\mathrm{ab}}$ is the projection and the arrow $\mathrm{Ext}^1_{\mathbb{Z}} (G^{\mathrm{ab}}, \: A) \to H^2 (G^{\mathrm{ab}}, \: A)$ is the one in the universal coefficient sequence
\[
0 \to \mathrm{Ext}^1 (G^{\mathrm{ab}}, \: A) \to H^2 (G^{\mathrm{ab}}, \: A) \stackrel{c}{\to} \mathrm{Hom} (\Lambda^2 G^{\mathrm{ab}}, \: A) \to 0 \, .
\]
This means that if $e_1-e_2$ are toroidally related, then their difference corresponds to an extension $\tilde{G}$ of $G$ by $A$ induced from an abelian extension $\tilde{G}^{\mathrm{ab}}$ of $G^{\mathrm{ab}}$ by $A$:
\[
\begin{CD}
1 @>>> A @>>> \tilde{G} @>>> G @>>> 1\\
@.   @|    @VVV   @VVV  @.\\
1 @>>> A @>>> \tilde{G}^{\mathrm{ab}} @>>> G^{\mathrm{ab}} @>>> 1\, .
\end{CD}
\]

\

The next Proposition shows that if $G_1$ and $G_2$ are toroidally related, then their stable cohomologies have important properties in common.

\begin{proposition}\xlabel{ptoroidal}
With the set-up of Definition \ref{dtoroidal} we have
\begin{itemize}
\item[(1)]
$H^*_{\mathrm{nr}} (G_1, \: \ZZ /p) \simeq H^*_{\mathrm{nr}} (G_2, \: \ZZ /p )$;
\item[(2)]
if $H^*_{\mathrm{s}} (G_1, \: \ZZ /p )$ is detected by abelian subgroups, so is $H^*_{\mathrm{s}} (G_2, \: \ZZ /p )$.
\end{itemize}
\end{proposition}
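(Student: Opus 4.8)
The plan is to place $G_1$ and $G_2$ inside one common extension of $G$ by the torus and then to reduce both assertions to the behaviour of stable (and unramified) cohomology under extensions whose quotient is a torus.

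\emph{Step 1: a common torus extension.} Let $\tau := (i_1)_{\ast} e_1 = (i_2)_{\ast} e_2 \in H^2(G, T)$ be the common image, and let $\tilde{G}$ be the central extension $1 \to T \to \tilde{G} \to G \to 1$ with class $\tau$. Since $\tau = (i_k)_{\ast} e_k$, the group $\tilde{G}$ is the pushout of $e_k$ along $i_k$; concretely this produces embeddings $G_k \hookrightarrow \tilde{G}$ over $G$ with $\tilde{G} = T \cdot G_k$ and $T \cap G_k = A_k$ (identifying $A_k$ with $i_k(A_k)$), so that $G_k$ is normal in $\tilde{G}$ and $\tilde{G}/G_k \cong T/A_k$ is a torus. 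Because $T$ is central, the same construction applied to the restricted extensions $e_k|_H$ of any subgroup $H \le G$ (which again agree in $H^2(H, T)$) will let me compare preimages of subgroups of $G$ in $G_1$, in $G_2$, and in $\tilde{G}$, all with torus quotients. I fix once and for all a generically free representation $W$ of $\tilde{G}$; its restriction to any of the finite subgroups below is again generically free, and by the no-name lemma the stable and unramified cohomology read off from $W$ coincide with those computed from an arbitrary generically free representation of the subgroup in question.

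\emph{Step 2: torus quotients and stable cohomology.} The key point I would isolate is the following: whenever $H \trianglelefteq \hat{H}$ with $\hat{H}/H$ a torus $S$ (so that $S = \hat{H}/H$ acts on $W/H$ and $W/H \to W/\hat{H}$ is an $S$-torsor), restriction induces isomorphisms $H^*_{\mathrm{s}}(\hat{H}) \xrightarrow{\sim} H^*_{\mathrm{s}}(H)$ and $H^*_{\mathrm{nr}}(\hat{H}) \xrightarrow{\sim} H^*_{\mathrm{nr}}(H)$. Indeed, since the torus $S$ is special the $S$-torsor is generically trivial, so $\CC(W/H) \cong \CC(W/\hat{H})(t_1, \dots, t_r)$ is purely transcendental over $\CC(W/\hat{H})$. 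The induced map on Galois cohomology is a split injection, which gives injectivity of $\mathrm{res}$ on $H^*_{\mathrm{s}}$, while $H^*_{\mathrm{nr}}$ is unchanged because it is a stable birational invariant. Applying this to $G_k \hookrightarrow \tilde{G}$ yields $H^*_{\mathrm{s}}(G_1) \cong H^*_{\mathrm{s}}(\tilde{G}) \cong H^*_{\mathrm{s}}(G_2)$ and the analogous statement for $H^*_{\mathrm{nr}}$, which already proves (1); equivalently, the chain of torus torsors shows that $V_1/G_1$ and $V_2/G_2$ are stably birational and one may invoke stable birational invariance of unramified cohomology directly.

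\emph{Step 3: transfer of detection.} Suppose $H^*_{\mathrm{s}}(G_1)$ is detected by abelian subgroups and let $0 \ne \xi \in H^*_{\mathrm{s}}(G_2)$. Using Step 2, I lift $\xi$ to $0 \ne \tilde{\xi} \in H^*_{\mathrm{s}}(\tilde{G})$ and push it to $0 \ne \xi_1 := \mathrm{res}^{\tilde{G}}_{G_1}\tilde{\xi} \in H^*_{\mathrm{s}}(G_1)$. By hypothesis there is an abelian $B \le G_1$ with $\mathrm{res}^{G_1}_B \xi_1 \ne 0$. Put $\tilde{B} := T \cdot B$, an abelian subgroup of $\tilde{G}$ containing $T$, and $B' := \tilde{B} \cap G_2$, an abelian subgroup of $G_2$; by the modular law $\tilde{B} = TB = TB'$, so both $\tilde{B}/B$ and $\tilde{B}/B'$ are tori and Step 2 applies to $B \hookrightarrow \tilde{B} \hookleftarrow B'$. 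Functoriality of restriction makes the squares
\[
\begin{CD}
H^*_{\mathrm{s}}(\tilde{G}) @>{\sim}>> H^*_{\mathrm{s}}(G_k)\\
@VVV @VVV\\
H^*_{\mathrm{s}}(\tilde{B}) @>{\sim}>> H^*_{\mathrm{s}}(B^{(k)})
\end{CD}
\]
commute for $k = 1, 2$ (with $B^{(1)} = B$, $B^{(2)} = B'$), the horizontal maps being the isomorphisms of Step 2. The $k=1$ square gives $\mathrm{res}^{\tilde{B}}_{B}(\mathrm{res}^{\tilde{G}}_{\tilde{B}}\tilde{\xi}) = \mathrm{res}^{G_1}_B \xi_1 \ne 0$, hence $\mathrm{res}^{\tilde{G}}_{\tilde{B}}\tilde{\xi} \ne 0$; the $k=2$ square, whose lower horizontal arrow is an isomorphism, then yields $\mathrm{res}^{G_2}_{B'}\xi = \mathrm{res}^{\tilde{B}}_{B'}(\mathrm{res}^{\tilde{G}}_{\tilde{B}}\tilde{\xi}) \ne 0$, so $\xi$ is detected on the abelian subgroup $B' \le G_2$.

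I expect the main obstacle to be the surjectivity half of the isomorphism $H^*_{\mathrm{s}}(\hat{H}) \xrightarrow{\sim} H^*_{\mathrm{s}}(H)$ in Step 2: I must show that passing to the purely transcendental extension $\CC(W/\hat{H})(t_1, \dots, t_r)$ creates no new stable classes, i.e. that every class arising from $H^*(H)$ is already constant in the torus directions and descends to $\hat{H}$. This is the place where the special structure of the torus and the vanishing of residues along the torus coordinates have to be exploited carefully; combined with checking that the isomorphisms of Step 2 are natural enough for the squares in Step 3 to commute, it forms the technical heart of the argument. Everything else is formal pushout and subgroup bookkeeping.
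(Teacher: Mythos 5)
Your part (1) is correct and is essentially the paper's own argument: $V/G_1$ and $V/G_2$ are both generically trivial torus bundles over $V/\tilde G$ (the paper's $G_c$), hence stably birational, and unramified cohomology is a stable birational invariant. The gap is in Step 2, and it is fatal for part (2): for $H\trianglelefteq \hat H$ with torus quotient, the restriction $H^*_{\mathrm{s}}(\hat H,\ZZ/p)\to H^*_{\mathrm{s}}(H,\ZZ/p)$ is injective but \emph{not} surjective, so the claimed isomorphisms $H^*_{\mathrm{s}}(G_1)\simeq H^*_{\mathrm{s}}(\tilde G)\simeq H^*_{\mathrm{s}}(G_2)$ fail. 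Take $H=\ZZ/p\subset\hat H=\CC^{*}$ (the $p$-torsion) and $W=\CC$ the standard representation: then $\CC(W/H)=\CC(t)$ is indeed purely transcendental over $\CC(W/\hat H)=\CC$, but $H^1_{\mathrm{s}}(\ZZ/p,\ZZ/p)\neq 0$ is generated by the class of the ramified cover $t\mapsto t^{p}$, which comes from no class of $\hat H$ (which has no nontrivial $\ZZ/p$-valued character). This is exactly the degenerate instance $G=1$, $G_1=\ZZ/p$, $G_2=1$ of Definition \ref{dtoroidal}, showing that toroidally related groups need not have isomorphic stable cohomology at all --- only the unramified part is preserved. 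You flagged the surjectivity half as ``the main obstacle'', but it is not a technical point awaiting verification: it is false, and with it the lift of $\xi$ to $\tilde\xi\in H^*_{\mathrm{s}}(\tilde G)$ at the start of Step 3 does not exist.

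The paper's proof of (2) goes through the object dual to your pushout, namely the fiber product $G_{12}=G_1\times_G G_2$, a \emph{finite} group surjecting onto both $G_i$ with finite abelian kernel. In that direction one only needs \emph{injectivity} of the inflation $H^*_{\mathrm{s}}(G_2)\hookrightarrow H^*_{\mathrm{s}}(G_{12})$ (which holds, by a torus-bundle argument of the same flavour as your Step 2 injectivity), plus the statement that detection by abelian subgroups passes from $G_1$ up to $G_{12}$; the latter uses that $G_{12}$, being a toroidally trivial extension of $G_1$, has its cohomology stabilized through $\hat\ZZ^{\,r}\times G_1$. Abelian subgroups of $G_{12}$ detecting a class are then pushed forward to $G_2$ along $\pi_2$. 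To repair your argument, replace $\tilde G$ by $G_{12}$ and redo Step 3 with inflations in place of the nonexistent lifts.
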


\begin{proof}
(1) Let $G_c$ be the central extension of $G$ by $T=(\CC^{\ast })^r$ determined by $G_1$ and $G_2$. Then $G_c$ is an extension $1\to G_i \to G_c \to T/A_i\to 1$. Then a generically free representation $V$ for $G_c$ gives a generically free representation for both $G_i$, and $V/G_i \to V/G_c$ is a torus principal bundle, hence locally trivial. In particular, $V/G_i$, $i=1, \: 2$, are stably birationally isomorphic and have the same unramified cohomology.

\

(2) Look at the fiber product of $G_1$ and $G_2$ over $G$:
\[
\begin{CD}
G_{12} = G_1\times_G G_2 @>{\pi_1}>> G_1 \\
@V{\pi_2}VV    @VVV \\
G_2 @>>> G\, .
\end{CD}
\]
Then $G_{12}$ is a toroidally trivial extension of both $G_1$ and $G_2$. We subdivide the proof into two auxiliary steps:
\begin{itemize}
\item[(a)]
If the stable cohomology of $G_1$ is detected by abelian subgroups, the same holds for the stable cohomology of $G_{12}$.
\item[(b)]
$H^{\ast}_{\mathrm{s}}(G_2, \: \ZZ/p)$ injects into $H^{\ast }_{\mathrm{s}} (G_{12}, \: \ZZ/p)$.
\end{itemize}
Given (a) and (b), one may conclude as follows: a nontrivial element $\alpha\in H^{\ast}_{\mathrm{s}}(G_2, \: \ZZ/p)$ is still nonzero in  $H^{\ast }_{\mathrm{s}} (G_{12}, \: \ZZ/p)$, hence is nonzero on some abelian subgroup $A\subset G_{12}$ by (a). Hence $\alpha$ will be nontrivial on $\pi_2 (A) \subset G_2$.\\
To prove (a) note that $G_{12}$, being a toroidally trivial extension of $G_1$, is induced from an abelian extension $\tilde{G}_1^{\mathrm{ab}}$ of $G_1^{\mathrm{ab}}$: 
\[
\begin{CD}
1 @>>> K @>>> G_{12} @>>> G_1 @>>> 1\\
@.   @|    @VVV   @VVV  @.\\
1 @>>> K @>>> \tilde{G}_1^{\mathrm{ab}} @>>> G_1^{\mathrm{ab}} @>>> 1\, .
\end{CD}
\]
Since $\tilde{G}_1^{\mathrm{ab}}$ is a finitely generated abelian group, there is a surjection $\hat{\ZZÊ}^r \twoheadrightarrow \tilde{G}_1^{\mathrm{ab}}$, hence by the universal property of fiber products we get a map $\hat{\ZZ }^r \times G_1 \to G_{12}$. We claim that the stable cohomology of $G_{12}$ injects into the stable cohomology of the profinite group $\hat{\ZZ } \times G_1$; this is so because $G_{12}$ has a generically free representation $V = \bigoplus_{\chi \in K^{\ast }} \mathrm{Ind}_{K}^{G_{12}} \CC_{\chi }$ such that $V/G_{12}$ is a torus bundle over an open subvariety of the product of projective spaces $\mathbb{P} = \prod_{\chi \in K^{\ast}} \mathbb{P} (\mathrm{Ind}_{K}^{G_{12}} \CC_{\chi })$. It is trivial over an open subvariety $U$ in $\mathbb{P}$, and the profinite completion of the corresponding fundamental group of $U \times (\CC^{\ast })^r$, $\hat{\ZZ}^r\times G_1'$, maps to $\hat{\ZZ } \times G_1$ and gives a partial stabilization for the cohomology of $G_{12}$. Hence the cohomology of $G_{12}$ is stabilized by $\hat{\ZZ } \times G_1''$ where $G_1''$ is a group stabilizing the cohomology of $G_1$ (note that we have a K\"unneth formula in this setting as the stable cohomology of $G_1$ is detected by abelian subgroups). In other words the map
\[
H^* (G_{12}, \: \ZZ/p) \to H^{\astÊ}_{\mathrm{s}} (\hat{\ZZ } \times G_1, \: \ZZ/p)
\]
gives stabilization for $H^* (G_{12}, \: \ZZ/p)$. As the stable cohomology of $\hat{\ZZ } \times G_1$ is detected by abelian subgroups, claim (a) follows.

\

Claim (b) follows from the fact that by the above reasoning also
\[
H^* (G_{12}, \: \ZZ/p) \to H^{\astÊ}_{\mathrm{s}} (\hat{\ZZ } \times G_2, \: \ZZ/p)
\]
gives stabilization, but $H^{\ast}_{\mathrm{s}} (G_2, \: \ZZ/p)$ injects into $H^{\astÊ}_{\mathrm{s}} (\hat{\ZZ } \times G_2, \: \ZZ/p)$.
\end{proof}

\section{Comparison to the notion of isoclinism}\xlabel{sIsoclinism}

In \cite{Hall}, P.Hall introduced the notion of \emph{isoclinism} of finite groups which morally speaking means that the two groups have the same commutator function.

\begin{definition}\xlabel{dIsoclinism}
Two finite groups $G_1$ and $G_2$ with centers $Z_1$ and $Z_2$ are said to be \emph{isoclinic} if there are isomorphisms
\[
i\, :\, G_1/Z_1 \to G_2/Z_2, \quad j\, :\, [G_1, \: G_1] \to [G_2, \: G_2]
\]
such that
\[
\begin{CD}
G_1/Z_1\times G_1/Z_1 @>{(i, \: i)}>> G_2/Z_2\times G_2/Z_2 \\
@V{[ \cdot , \cdot ]}VV    @V{[\cdot , \cdot ]}VV \\
[G_1, \: G_1] @>{j}>> [G_2, \: G_2]\, 
\end{CD}
\]
commutes.
\end{definition}

As was remarked above, if $G_1$ and $G_2$ are toroidally related extensions of the group $G$, then $G_{12}$ is a toroidally trivial extension of both $G_1$ and $G_2$; which means that it is an extension induced from an abelian extension of $G_1^{\mathrm{ab}}$ resp. $G_2^{\mathrm{ab}}$. Hence $G_1$ and $G_2$ have the same commutator function (that of $G_{12}$) and are isoclinic.

\

Suppose conversely that $G_1$ and $G_2$ are isoclinic. Then naturally $G= G_1/Z_1 \simeq G_2/Z_2$ and we want to show that 
\begin{gather*}
e_1\, :\, 1 \to Z_1 \to G_1 \to G \to 1 \, , \\
e_2\,:\, 1\to Z_2 \to G_2 \to G \to 1
\end{gather*}
are toroidally related. We have to show that $e_1$ and $e_2$ map to the same element under $\iota_1\circ \alpha_i$ where $\alpha_i$ is the map in the sequence
\[
\begin{CD}
0 \to \mathrm{Ext}^1_{\mathbb{Z}} (H_1 (G, \: \ZZ ), \: Z_i) \to H^2 (G, \: Z_i ) @>{\alpha_i}>> \mathrm{Hom} (H_2 (G, \: \ZZ ) , \: Z_i) \to 0\, .
\end{CD}
\]
and $\iota_i$ is the map
\[
\iota_i \, :\, \mathrm{Hom} (H_2 (G, \: \ZZ ) , \: Z_i) \to  \mathrm{Hom} (H_2 (G, \: \ZZ ) , \: (\CC^{\ast})^r)
\]
induced by the embeddings $Z_i \hookrightarrow ( \CC^{\ast })^r$. 
To do this we use an interpretation of the Schur multiplier $H_2 (G, \: \ZZ )$ from \cite{Karp}, section 2.6, in terms of commutator relations. Let $\langle G, \: G\rangle$ be the free group generated by all pairs $\langle x, \: y\rangle$ with $x, \: y\in G$ together with its natural map $c \, :\, \langle G, \: G \rangle \to [G, \: G]$ with $c(\langle x, \: y\rangle ) = [x, \: y] = xyx^{-1}y^{-1}$. The kernel of $p$, denoted by $C(G)$, consists of relations among commutators in $G$. Moreover, there are the following universal commutator relations valid in any group $G$:
\begin{gather*}
\langle x, \: x \rangle , \: \langle x, \: y\rangle \langle y, \: x\rangle , \: \langle y, \: z \rangle^x \langle x, \: z\rangle \langle xy, \: z\rangle^{-1} \, , \\
\langle y, \: z\rangle^x \langle y, \: z\rangle^{-1} \langle x, \: [y, \: z] \rangle^{-1}
\end{gather*}
where for $x,\: y, \: z\in G, \: \langle y, \: z\rangle^x = \langle xyx^{-1}, \: xzx^{-1}\rangle$. The smallest normal subgroup in $C(G)$ containing all these universal relations is denoted by $B(G)$. Let $H(G)=C(G)/B(G)$ be the quotient. The universal relations are the ones that hold in a free group. We have now, by Theorem 2.6.6 of \cite{Karp}, that naturally $H(G) \simeq H_2(G, \: \ZZ )$, the Schur multiplier, for a finite group $G$. In fact this is a consequence of Hopf's formula for $H_1 (G, \: \ZZ )$ which says that if $F \twoheadrightarrow G$ is a free presentation of $G$ with subgroup of relations $R$, then $H_2(G, \: \ZZ ) \simeq (R \cap [F, \: F] )/ [F, \: R]$.

Now we want to reinterpret the maps 
\[
\iota_i \circ \alpha_i \, :\, H^2 (G, \: Z_i) \to \mathrm{Hom} (H(G), \: Z_i) \hookrightarrow \mathrm{Hom} (H(G), \: ( \CC^{\ast })^r)
\] 
(see also the proof of Theorem 2.6.6 in \cite{Karp}): if $A$ is one of $Z_1$ or $Z_2$, then to a central extension $e\in H^2 (G, \: A)$ given by
\[
1\to A \to \tilde{G} \to G \to 1
\]
we first associate the homomorphism $\langle G, \: G \rangle \to \tilde{G}$ which maps $\langle x, \: y\rangle$ to $[\bar{x}, \: \bar{y}]$. This homomorphism maps $C(G)$ into $A$ (in fact onto $A\cap [\tilde{G}, \: \tilde{G}]$), and it maps $B(G)$ to $\{ 1 \}$, hence associated to $e$ one gets a homomorphism 
\[
\psi_e \, :\, H(G) \to A \cap [\tilde{G}, \: \tilde{G}] \subset A \, .
\] 
Then $\alpha (e) = \psi_e$. It is obvious that if $e_1$ and $e_2$ have the same commutator function, then $\psi_{e_1} = \psi_{e_2}$, viewed as maps into $(\CC^{\ast})^r$. Hence we have proven

\begin{theorem}\xlabel{tComparisonIsoclinicToric}
The notions of being toroidally related and isoclinic coincide. In particular, if $G_1$ and $G_2$ are isoclinic, then generically free linear quotients for $G_1$ and $G_2$ are stably equivalent. 
\end{theorem}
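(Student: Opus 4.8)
The plan is to prove the equivalence as two implications and then read off the geometric statement from the proof of Proposition \ref{ptoroidal}. The unifying idea, already visible in Examples (1)--(2), is that both ``toroidally related'' and ``isoclinic'' are ways of recording that $e_1$ and $e_2$ carry the \emph{same commutator function}, and that this function is exactly the datum captured by the universal-coefficient map $\alpha_i\colon H^2(G, Z_i) \to \mathrm{Hom}(H_2(G,\ZZ), Z_i)$: its kernel $\mathrm{Ext}^1_\ZZ(G^{\mathrm{ab}}, Z_i)$ consists precisely of the extensions induced from abelian extensions of $G^{\mathrm{ab}}$, and such inductions leave commutators untouched. So the whole equivalence should reduce to comparing the images of $e_1, e_2$ in $\mathrm{Hom}(H_2(G,\ZZ), (\CC^*)^r)$ after the torus embeddings.

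For the direction ``toroidally related $\Rightarrow$ isoclinic'', I would form the fiber product $G_{12} = G_1 \times_G G_2$ and invoke the computation in Example (2): toroidal relatedness means $e_1 - e_2$ lies in the $\mathrm{Ext}^1$-part, so $G_{12}$ is induced from an abelian extension of $G^{\mathrm{ab}}$ over each factor. Since induction from the abelianization does not alter the commutator map, $G_1$ and $G_2$ acquire a common commutator function through $G_{12}$, and the resulting isomorphisms of $G_i/Z_i$ and $[G_i, G_i]$ are compatible with the commutator pairings, which is isoclinism in the sense of Definition \ref{dIsoclinism}.

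For the converse, which I expect to be the crux, I would use Karpilovsky's presentation $H_2(G,\ZZ) \simeq H(G) = C(G)/B(G)$ by commutator relations. The key step is to show that $\iota_i \circ \alpha_i$ sends a central extension $e\colon 1 \to Z_i \to G_i \to G \to 1$ to the homomorphism $\psi_e\colon H(G) \to Z_i \cap [G_i, G_i]$ given on generators by $\langle x, y\rangle \mapsto [\bar x, \bar y]$; this is where Theorem 2.6.6 of \cite{Karp} and the verification that the universal commutator relations generating $B(G)$ map to $1$ do the work. Granting $\alpha(e) = \psi_e$, isoclinism supplies $j\colon [G_1, G_1] \to [G_2, G_2]$ compatible with the commutator pairings on $G_i/Z_i$; choosing the embeddings $Z_i \hookrightarrow (\CC^*)^r$ so that they agree via $j$ on $Z_1 \cap [G_1, G_1]$ and $Z_2 \cap [G_2, G_2]$, one obtains $\psi_{e_1} = \psi_{e_2}$ after composition into $(\CC^*)^r$, i.e. $e_1$ and $e_2$ are toroidally related. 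The main obstacle is precisely this identification $\alpha(e) = \psi_e$, together with the bookkeeping of centers: one must check that the commutator data furnished by isoclinism descends correctly to the subgroups $Z_i \cap [G_i, G_i]$ on which the $\psi_{e_i}$ actually take values.

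Finally, the ``in particular'' statement follows at once: having shown isoclinic groups are toroidally related, I would appeal to the first half of the proof of Proposition \ref{ptoroidal}(1). The common central extension $G_c$ of $G$ by $T = (\CC^*)^r$ realizes each $G_i$ so that $V/G_i \to V/G_c$ is a locally trivial torus bundle; hence $V/G_1$ and $V/G_2$ are each stably birational to $V/G_c$, and therefore to one another, which is the asserted stable equivalence.
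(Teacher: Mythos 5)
Your proposal follows essentially the same route as the paper: the fiber product $G_{12}$ together with Example (2) for ``toroidally related $\Rightarrow$ isoclinic'', Karpilovsky's commutator-relation model $H(G)=C(G)/B(G)$ of the Schur multiplier and the identification $\alpha(e)=\psi_e$ for the converse, and the torus-bundle argument from the proof of Proposition \ref{ptoroidal}(1) for the stable equivalence of linear quotients. The only difference is cosmetic: you make explicit the choice of embeddings $Z_i\hookrightarrow(\CC^{\ast})^r$ compatible with $j$ on $Z_i\cap[G_i,G_i]$, a point the paper leaves implicit in the assertion that $\psi_{e_1}=\psi_{e_2}$ as maps into $(\CC^{\ast})^r$.
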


This answers Question (1.11) of \cite{HKK} in the affirmative; the partial result that $G_1$ and $G_2$ have isomorphic second unramified cohomology groups has been proven in \cite{Mor}.

\

We give an additional elementary argument for the implication ``$G_1$ and $G_2$ isoclinic" $\implies$ ``the extensions $e_1$ and $e_2$ are toroidally related". Suppose that $G_1$ and $G_2$ are isoclinic, and consider the fiber product over $G$ as above:
\[
\begin{CD}
G_{12} = G_1\times_G G_2 @>{\pi_1}>> G_1 \\
@V{\pi_2}VV    @VVV \\
G_2 @>>> G\, .
\end{CD}
\]
We want to show that then the extensions $1\to Z_2\to G_{12} \to G_1 \to 1$ and $1\to Z_1\to G_{12} \to G_2 \to 1$ are toroidally trivial. Note that the preimages of $[G_1, \: G_1] \subset G_1$ and $[G_2, \: G_2]\subset G_2$ coincide with $[G_{12}, \: G_{12}]$ and this group equals 
\[
[G_{12}, \: G_{12}] = \langle \left( [g_1, \: h_1], \: j ([g_1, \: h_1]) \right) \rangle \, \quad g_1, \: h_1 \in G_1
\]
which is a ``diagonal subgroup" of $G_{12}$ which intersects both $Z_1 \simeq \{ (z_1, \: 1)\, | \, z_1\in Z_1 \} \subset G_{12}$ and $Z_2 \simeq \{ (1, \: z_2)\, | \, z_2\in Z_2 \}$ trivially. In other words, $Z_1$ maps isomorphically to $G_2/[G_2, \: G_2]=G_2^{\mathrm{ab}}$ and $Z_2$ isomorphically to $G_1/[G_1, \: G_1]=G_1^{\mathrm{ab}}$. Therefore, for example, the extension $G_{12}$ of $G_2$ by $Z_1$ is induced by an abelian extension
\[
1 \to Z_1 \to G_{12}/[G_{12}, \: G_{12}]  \to G_2/[G_2, \: G_2] \to 1\, .
\]
The same holds for the extension $G_{12}$ of $G_1$. Hence these two are toroidally trivial, hence $e_1$ and $e_2$ are toroidally related.

\

The following Remark is sometimes useful and summarizes some compatibilities of isoclinism with passing to subgroups or quotients. It can be found already in \cite{Hall}.

\begin{remark}\xlabel{rIsoclinismProperties}
An isoclinism between $G_1$ and $G_2$ sets up a bijective correspondence between subgroups of $G_1$ containing $Z_1$ and subgroups of $G_2$ containing $Z_2$, and corresponding subgroups are isoclinic.\\
In particular, a centralizer in the group $G_2$ is isoclinic to a centralizer in $G_1$.\\
Moreover, an isoclinism also gives a bijective correspondence between quotient groups $G_1/K_1$ and $G_2/K_2$, where $K_1\subset [G_1, \: G_1]$ and $K_2\subset [G_2, \: G_2]$, and corresponding quotient groups are isoclinic. 
\end{remark}

\section{Stable cohomology of wreath products}\xlabel{sWreathProducts}
Now we want to use Proposition \ref{ptoroidal} to compute the stable cohomology of the iterated wreath product $G_n = \ZZ /p \wr \ZZ /p \wr \dots \wr \ZZ /p$ of groups $\ZZ /p$ (there are $n$-factors $\ZZ /p$). We first define a class of groups which will turn out to be stable under taking iterated centralizers, provided one identifies groups which are isoclinic/toroidally related, and contains $G_n$.

\begin{definition}\xlabel{dClassCp}
Let $\mathcal{D}$ be a class of groups with the property that (1) every group in $\mathcal{D}$ has stably rational generically free linear quotients, and (2) the centralizer of any element in a group in $\mathcal{D}$ again belongs to $\mathcal{D}$.\\
Then we define a group to belong to the class $\mathcal{C}_p(\mathcal{D})$ if it can be reached starting from a group in $\mathcal{D}$ by a finite number of the following operations, which successively enlarge the set of groups in $\mathcal{C}_p(\mathcal{D})$ already constructed:
\begin{itemize}
\item[(a)]
taking a wreath product with a group $\ZZ/p$, i.e. passing from $H$ to $H\wr \ZZ/p$,
\item[(b)]
taking a finite direct product,
\item[(c)]
passing from $H$ to an isoclinic group $H'$.
\end{itemize}
\end{definition}

It follows that all the groups in $\mathcal{C}_p (\mathcal{D})$  have trivial higher unramified cohomology, in fact they all have stably rational quotients. Moreover, clearly $G_n$ belongs to $\mathcal{C}_p$ if we take $\mathcal{D}$ to consist only of the group $\ZZ/p$. 

\

The key result will be Proposition \ref{pInductionWreath} below. We precede it with a Lemma on isoclinism types of centralizers in a wreath product $H=H'\wr\ZZ/p$ which will be used in the proof of Proposition \ref{pInductionWreath}.

\begin{lemma}\xlabel{lIsoclinismTypeCentralizersWreath}
Let $H'$ be a finite group, and let $H=H'\wr \ZZ/p$ be its wreath product with $\ZZ/p$. Let $x\in H$ be some element, and $Z_H(x)$ be its centralizer. Then one of the following is true:
\begin{itemize}
\item[(a)]
The element $x$ is contained in $(H')^{p}\subset H$, $Z_H(x)$ does \emph{not} surject onto the quotient $\ZZ/p$ under the natural projection $H\twoheadrightarrow \ZZ/p$, and the centralizer $Z_H(x)$ is a product 
\[
Z_{H'} (x_1) \times \dots \times Z_{H'} (x_{p})
\]
of the centralizers $Z_{H'}(x_i)$ of the components $x_i$ of $x$ with respect to the product $H'\times ...\times H'$ ($p$-factors) in $H$.
\item[(b)]
The cyclic subgroup $\langle x\rangle$ generated by $x$ in $H$ surjects onto the quotient $\ZZ/p$ under the natural projection $H\twoheadrightarrow \ZZ/p$, and $Z_H (x)$ is \emph{isoclinic} to 
\[
Z_{H'} (x^{p}) \times \ZZ/p\, .
\]
\item[(c)]
The element $x$ is contained in $(H')^p$ and $Z_{H}(x)$ surjects onto $\ZZ/p$. Then $x= (x', \dots , \: x') \in (H')^p$ and 
\[
Z_{H'}(x) \wr \ZZ/p\, .
\]
\end{itemize}
\end{lemma}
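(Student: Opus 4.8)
The plan is to analyze the centralizer $Z_H(x)$ of an element $x \in H = H' \wr \ZZ/p$ by exploiting the semidirect product structure $H = (H')^p \rtimes \ZZ/p$, where $\ZZ/p$ acts by cyclically permuting the $p$ factors. The fundamental dichotomy is whether $x$ lies in the base group $(H')^p$ or projects nontrivially to the top $\ZZ/p$; this distinction governs which of the three cases occurs. First I would write $x = (x_1, \dots, x_p)\,\sigma^a$ with $(x_1, \dots, x_p) \in (H')^p$ and $\sigma$ a generator of $\ZZ/p$, and compute the centralizer condition $yxy^{-1} = x$ explicitly for $y = (y_1, \dots, y_p)\,\sigma^b$. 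The key observation is that conjugation by $\sigma$ shifts indices cyclically, so the commutation relations become a system of equations relating the components $y_i$ to the $x_i$, and the structure of the solution set depends entirely on whether the permutation part of $x$ is trivial.

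Second, I would treat case (a), where $x \in (H')^p$ and additionally $Z_H(x)$ does not surject onto $\ZZ/p$. Here every element of the centralizer must have trivial permutation part, so $Z_H(x) \subset (H')^p$, and the centralizer condition decouples completely: $y$ centralizes $x$ if and only if each $y_i$ centralizes $x_i$ in $H'$. This gives the product $Z_{H'}(x_1) \times \dots \times Z_{H'}(x_p)$ directly, with no isoclinism needed. For case (b), where $\langle x \rangle$ surjects onto $\ZZ/p$, I would note that after conjugating $x$ may be normalized so that its $p$-th power $x^p$ lands in the base group in a controlled diagonal position; here the main content is identifying $Z_H(x)$ up to isoclinism rather than on the nose. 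The centralizer contains the cyclic group generated by (a suitable power-related element projecting to) $\ZZ/p$ and a copy of $Z_{H'}(x^p)$ embedded diagonally, and the claim is that these generate a group isoclinic to $Z_{H'}(x^p) \times \ZZ/p$. The isoclinism rather than isomorphism arises because the genuine centralizer may differ from the direct product by a central extension that does not affect the commutator structure.

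For case (c), where $x \in (H')^p$ but $Z_H(x)$ does surject onto $\ZZ/p$, the surjectivity forces the existence of a centralizing element with nontrivial permutation part, and a short computation shows this is possible precisely when all components of $x$ are conjugate in a compatible cyclic pattern; the stated normalization $x = (x', \dots, x')$ (all components equal, after conjugation) is the cleanest representative. Then the centralizer is built from the diagonal $\ZZ/p$ together with the product of the component centralizers, which reassembles into the wreath product $Z_{H'}(x') \wr \ZZ/p$. I would verify this by checking that the permutation action on the factors $Z_{H'}(x')$ is the restricted cyclic action, matching the wreath product definition.

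The main obstacle I anticipate is case (b): establishing the isoclinism rather than an outright isomorphism requires care, since one must produce the isomorphisms $i$ and $j$ on the central quotient and the commutator subgroup of Definition \ref{dIsoclinism} and verify the compatibility square commutes, rather than simply exhibiting a bijection of groups. Concretely, the difficulty is pinning down how the power $x^p$ interacts with the diagonal embedding and showing that the resulting central extension is toroidally trivial in the sense needed; I expect to invoke the commutator-function characterization of isoclinism established via Theorem \ref{tComparisonIsoclinicToric}, reducing the claim to a comparison of commutator functions that manifestly agree once the permutation part is stripped away.
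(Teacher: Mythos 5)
Your case decomposition, your normalizations, and your identification of the centralizer in each case all coincide with the paper's proof: in (a) the centralizer sits inside $(H')^p$ and decouples into the product of component centralizers; in (b) one conjugates $x$ to $((a,\mathrm{id},\dots,\mathrm{id}),\sigma)$ and finds that $Z_H(x)\cap (H')^p$ is the diagonal copy of $Z_{H'}(a)$, so $Z_H(x)=\langle x\rangle\cdot D$ with $D\simeq Z_{H'}(a)$; in (c) the paper reaches the normal form by a slightly slicker route (normalize a centralizing element $g$ projecting onto $\ZZ/p$ to $((a,\mathrm{id},\dots,\mathrm{id}),\sigma)$; then $g^p$ is diagonal and centralizes $x$, so $a$ centralizes each component, hence the pure shift $((\mathrm{id},\dots,\mathrm{id}),\sigma)$ itself lies in $Z_H(x)$, forcing $x=(x',\dots,x')$), but your ``compatible cyclic pattern'' computation yields the same conclusion. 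The one genuine divergence is how the isoclinism in case (b) is established. The paper regards $Z_H(x)$ and $Z_{H'}(a)\times\ZZ/p$ as central extensions of $Z_{H'}(a)/\langle a\rangle\times\ZZ/p$ by $\langle a\rangle$ and shows they become isomorphic after inducing up to $\mathbb{Q}/\mathbb{Z}$-coefficients: divisibility provides a central $p$-th root $A$ of $a$, and multiplying the lift of the generator of $\ZZ/p$ by $A^{-1}$ splits off the $\ZZ/p$ factor, so the two extensions are toroidally related, hence isoclinic by Theorem \ref{tComparisonIsoclinicToric}. Your proposal to compare commutator functions directly is more elementary and does close: since every element of $Z_H(x)$ commutes with $x$ by the very definition of the centralizer, $x$ is central in $Z_H(x)$, and with $x^p\in D$ one gets $[Z_H(x),Z_H(x)]=[D,D]$ and $Z_H(x)/Z(Z_H(x))\simeq D/(Z(D))$, exactly matching the central quotient and commutator subgroup of $Z_{H'}(a)\times\ZZ/p$, with the square of Definition \ref{dIsoclinism} commuting tautologically. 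Either route suffices for the paper's later use, since isoclinic and toroidally related are equivalent; your version avoids the divisibility trick at the cost of checking the definition by hand, which you correctly flagged as the only step requiring real care.
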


\begin{proof}
The cases enumerated in (a), (b), (c) obviously cover all the possibilities and are mutually exclusive. We deal with them one by one.

\

\textbf{Case (a)}
The centralizer $Z_{H} (x)$ is contained in $(H')^{p}$. Then $Z_{H} (x)$ is obviously the product of the centralizers of components.

\

\textbf{Case (b)}
Suppose that $\langle x \rangle$ surjects onto the quotient $\ZZ/p$ of $H=  H' \wr \ZZ /p$.  After conjugating by an element in $(H')^{p}$ we may assume $x = ((a, \: \mathrm{id}, \dots , \: \mathrm{id}), \: \sigma )$ for some $a\in H'$ and $\sigma\in \ZZ/p$ a generator. Indeed, if a priori $x = ((x_1, \dots , \: x_p), \: \sigma )$,  it is sufficient for this to solve the equations $c_1 x_1 c_2^{-1} =a, \dots , \: c_{p-1} x_{p-1} c_p^{-1} = \mathrm{id}, \: c_p x_p c_1^{-1}= \mathrm{id}$ in elements $c_i$ of $H'$, which is always possible successively, and conjugate by $c= ((c_1, \dots , \: c_p), \: \mathrm{id})$. We will assume therefore now that $x=((a, \: \mathrm{id}, \dots , \: \mathrm{id}), \: \sigma )$ with $\sigma$ a generator of $\ZZ/p$ for simplicity.\\
Clearly, $Z_{H} (x)$ is generated by $\langle x \rangle$ and those $y=((y_1, \dots , \: y_{p}), \: \mathrm{id})$ which commute with $x$. These elements $y$ will also commute with all powers of $x$, hence with $((a, \: a, \dots , \: a), \: \mathrm{id})$. That is, it is necessary that $a$ commutes with every $y_j$. But then the elements $y$ which commute with $x$ are precisely those such that $y_1 = \dots = y_{p}$ and all of them commute with $a$.

In this case it follows that $Z_H (x)$ is an extension
\[
0 \to Z_{H'} (a) \to Z_H (x ) \to \ZZ / p \to 0
\]
which will in general be nontrivial; however, we claim that $Z_H (x)$ is toroidally related to the product $Z_{H'} (a) \times \ZZ /p$. More precisely, $Z_H (x)$ is a central extension
\[
1 \to \langle a \rangle \to Z_H (x) \to Z_H (x) / \langle a \rangle \simeq Z_{H'} (a)/\langle a \rangle \times \ZZ /p \to 1
\]
of $Z_{H'} (a)/\langle a \rangle \times \ZZ /p$ by the cyclic group generated by $a$. We claim that this extension is toroidally related to the extension
\[
1 \to \langle a \rangle \to Z_{H'} (a) \times \ZZ /p \to Z_{H'} (a)/\langle a \rangle \times \ZZ /p \to 1 \, .
\]
Indeed, look at the extension $\widetilde{Z_H (x)}$ of $Z_{H'} (a)/\langle a \rangle \times \ZZ /p$ by $\mathbb{Q} /\mathbb{Z}$ induced by $Z_H (x)$:
\begin{center}
\xymatrix{
           &                                                     &               1\ar[d]                             &                                                                                &       \\
            &                                                     &               Z_H(x) \ar[d]        &                                                                                &       \\
1 \ar[r] & \mathbb{Q}/\mathbb{Z} \ar[r]\ar[dr]^{\cdot N} & \widetilde{Z_H (x)} \ar[r]\ar[d] & Z_{H'} (a)/\langle a \rangle \times \ZZ /p \ar[r] & 1\\
            &                                                     &              \mathbb{Q}/\mathbb{Z}\ar[d]        &                                                                                &       \\
                       &                                                     &               1                             &                                                                                &       
}
\end{center}
As $\mathbb{Q}/\mathbb{Z}$ is divisible, the element $a\in Z_{H'} (a) \subset Z_H (x)$ is a $p$-th power of a central element $A$ in $\widetilde{Z_H (x)}$. In particular, $\widetilde{Z_H (x)}$ contains also $Z_{H'} (a) \times \ZZ /p$; we can map a generator of $\ZZ /p$ to a lift of it in $Z_H (x)$ multiplied by $A^{-1}$.

\

\textbf{Case (c)}
We have $x\in (H')^{p}$, but there is an element $g\in Z_H (x)$ such that the subgroup generated by it surjects onto $\ZZ/p$. The element $g$ is then conjugate to $((a, \: \mathrm{id}, \dots , \: \mathrm{id}), \: \sigma )$ for some $a\in H'$ and $\sigma\in \ZZ/p$ a generator. We can assume then that $g=((a, \: \mathrm{id}, \dots , \: \mathrm{id}), \: \sigma )$. Now $g^{p} = ((a, \: a, \dots , \: a), \: \mathrm{id})\in Z_{(H')^{p}} (x)$, and as this is a product of the centralizers of the components of $x$, also $b=((a, \: \mathrm{id}, \dots , \: \mathrm{id}), \: \mathrm{id})\in Z_{H} (x)$. So $b^{-1} \cdot g = ( (\mathrm{id}, \dots , \: \mathrm{id} ), \: \mathrm{\sigma})$ is also always in $Z_{H} (x)$ in this case. Hence $x = (x', \dots , \: x')$ here, and $Z_{H} (x) = Z_{H'} (x')\wr \ZZ /p$.
\end{proof}

\begin{proposition}\xlabel{pInductionWreath}
Suppose $G$ is a group in $\mathcal{C}_p (\mathcal{D})$ and let $h\in G$ be some element. Then the centralizer $Z_G (h)$ is again a group in $\mathcal{C}_p(\mathcal{D})$.
\end{proposition}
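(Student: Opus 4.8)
The plan is to argue by induction on the number $n$ of operations in a construction sequence witnessing $G \in \mathcal{C}_p(\mathcal{D})$. The base case is $G \in \mathcal{D}$: here property (2) in the definition of $\mathcal{D}$ guarantees that every centralizer $Z_G(h)$ lies in $\mathcal{D}\subset \mathcal{C}_p(\mathcal{D})$. For the inductive step I would fix such a sequence and distinguish three cases according to the final operation, observing that in each case the group to which the last operation is applied is reached in fewer operations, so the inductive hypothesis applies to all of its centralizers.

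Suppose first that the last step is a finite direct product, $G = G_1\times \dots \times G_k$ with each $G_i\in \mathcal{C}_p(\mathcal{D})$ obtained in fewer operations. For $h=(h_1,\dots ,h_k)$ one has $Z_G(h)=Z_{G_1}(h_1)\times \dots \times Z_{G_k}(h_k)$; each factor lies in $\mathcal{C}_p(\mathcal{D})$ by the inductive hypothesis, and operation (b) closes the product back into $\mathcal{C}_p(\mathcal{D})$. Suppose next that the last step is passage to an isoclinic group, so that $G$ is isoclinic to some $G'\in \mathcal{C}_p(\mathcal{D})$. Then Remark \ref{rIsoclinismProperties} tells us that a centralizer in $G$ is isoclinic to a centralizer in $G'$; the latter lies in $\mathcal{C}_p(\mathcal{D})$ by the inductive hypothesis, and operation (c) finishes this case.

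The substantial case is when the last step is a wreath product, $G=H\wr \ZZ/p$ with $H\in \mathcal{C}_p(\mathcal{D})$ reached in fewer operations. Here I would invoke Lemma \ref{lIsoclinismTypeCentralizersWreath} with its $H'$ taken to be $H$, splitting the analysis of $Z_G(x)$ into exactly the three listed alternatives. In alternative (a) the centralizer is a direct product $Z_H(x_1)\times \dots \times Z_H(x_p)$ of centralizers in $H$, each in $\mathcal{C}_p(\mathcal{D})$ by the inductive hypothesis, so operation (b) applies. In alternative (c) we get $Z_H(x')\wr \ZZ/p$ with $Z_H(x')\in \mathcal{C}_p(\mathcal{D})$ by the inductive hypothesis, so operation (a) applies. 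In alternative (b) the lemma only gives that $Z_G(x)$ is isoclinic to $Z_H(x^p)\times \ZZ/p$; here I would use the elementary fact that adjoining a central abelian direct factor does not change the isoclinism type, so that $Z_H(x^p)\times \ZZ/p$ is isoclinic to $Z_H(x^p)$, and hence, by transitivity of isoclinism, $Z_G(x)$ is isoclinic to $Z_H(x^p)\in \mathcal{C}_p(\mathcal{D})$; operation (c) then places $Z_G(x)$ in $\mathcal{C}_p(\mathcal{D})$.

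The main obstacle is entirely concentrated in the wreath-product case, and it has already been dispatched by Lemma \ref{lIsoclinismTypeCentralizersWreath}; granting that lemma, the proposition reduces to a bookkeeping induction whose only nonformal ingredient is the observation that a central $\ZZ/p$ direct factor may be absorbed up to isoclinism. Alternatively, if one prefers to avoid that observation, when $\ZZ/p\in \mathcal{C}_p(\mathcal{D})$ (as in the case $\mathcal{D}=\{\ZZ/p\}$ relevant to $G_n$) one may instead apply operation (b) directly to $Z_H(x^p)\times \ZZ/p$ and then operation (c).
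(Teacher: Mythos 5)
Your proof is correct and follows essentially the same route as the paper: induction on the length of a construction sequence, with the same three cases (product, isoclinic, wreath product) and Lemma \ref{lIsoclinismTypeCentralizersWreath} carrying the weight in the wreath case. The only place you go beyond the paper's write-up is alternative (b) of that Lemma, where the paper leaves implicit how $Z_{H'}(x^p)\times \ZZ/p$ gets absorbed into $\mathcal{C}_p(\mathcal{D})$; your observation that a central abelian direct factor can be stripped off up to isoclinism is the clean way to handle this for arbitrary $\mathcal{D}$, since $\ZZ/p$ itself need not lie in $\mathcal{C}_p(\mathcal{D})$ in general.
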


\begin{proof}
We do induction over the number of steps it takes to reach a particular group in $\mathcal{C}_p (\mathcal{D})$ in a certain fixed succession of construction steps that give all the groups in $\mathcal{C}_p (\mathcal{D})$, starting from some group in $\mathcal{D}$ (i.e., we fix a particular ordered sequence of steps for the construction of all groups in $\mathcal{C}_p(\mathcal{D})$ from those in $\mathcal{D}$).

\

The beginning of the induction is trivial because the assertion of Proposition \ref{pInductionWreath} holds by assumption for all the groups in $\mathcal{D}$. Now suppose it holds for all groups in $\mathcal{C}_p (\mathcal{D})$ constructed up to a certain step $s$. We call this subset $\mathcal{ C}_p^{\le s} (\mathcal{D})$. Suppose then $H$ is a new group  constructed out of $\mathcal{C}_p^{\le s} (\mathcal{D})$ according to the rules in Definition \ref{dClassCp}. We have the following possibilities.
\begin{itemize}
\item[(1)] The group $H$ is a finite product
\[
H = H_1' \times \dots \times H'_N
\]
of groups $H_i'$ in $\mathcal{C}_p^{\le s} (\mathcal{D})$. Then the centralizer $Z_H (x)$ of an element $x\in H$ is the product of the centralizers of the components $x_i$ of $x$. Each of the $Z_{H_i}(x_i)$ belongs to $\mathcal{C}_p (\mathcal{D})$ by the induction hypothesis, hence so does the product as $\mathcal{C}_p (\mathcal{D})$ is closed under taking finite products by definition.
\item[(2)]
The group $H$ is isoclinic to a group $H'$ in $\mathcal{C}_p^{\le s}(\mathcal{D})$. By Remark \ref{rIsoclinismProperties}, centralizers of elements in $H$ are isoclinic to centralizers of elements in $H'$. The latter however belong to $\mathcal{C}_p (\mathcal{D})$ by induction. As $\mathcal{C}_p (\mathcal{D})$ is closed under passage to isoclinic groups, $H$ belongs to $\mathcal{C}_p (\mathcal{D})$, too, in this case.
\item[(3)]
The group $H$ is a wreath product 
\[
H = H' \wr \ZZ /p
\]
where $H'$ belongs to $\mathcal{C}_p^{\le s} (\mathcal{D})$. According to Lemma \ref{lIsoclinismTypeCentralizersWreath} above, we see, using the induction hypothesis and the definition of the class $\mathcal{C}_p (\mathcal{D})$, that $H$ also belongs to $\mathcal{C}_p(\mathcal{D})$. 
\end{itemize}

This concludes the proof.
\end{proof}

\begin{theorem}\xlabel{tMain}
The stable cohomology $H^*_{\mathrm{s}} (G, \: \ZZ /p)$ is detected by abelian subgroups for any group $G$ in $\mathcal{C}_p (\mathcal{D})$. 
\end{theorem}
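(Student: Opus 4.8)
The plan is to induct on the order $|G|$, reducing the whole statement to a single detection theorem for a wreath product that plays the role of Nakaoka's theorem in the stable setting. Say that a group $K$ \emph{has detection} if $H^*_{\mathrm{s}}(K,\ZZ/p)$ is detected by abelian subgroups. The first point is that having detection is an invariant of the isoclinism class: by Theorem \ref{tComparisonIsoclinicToric} isoclinic groups are toroidally related, this relation is symmetric, and Proposition \ref{ptoroidal}(2) then gives the equivalence in both directions. Thus detection is a property of the isoclinism class, so operation (c) of Definition \ref{dClassCp} requires no separate treatment and I may freely pass to isoclinic groups. By Proposition \ref{pInductionWreath} the class $\mathcal{C}_p(\mathcal{D})$ is closed under centralizers, and the groups produced in the product and wreath steps below again lie in $\mathcal{C}_p(\mathcal{D})$ with strictly smaller order, so that the inductive hypothesis applies to them. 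The base of the induction is $\mathcal{D}$: for $\mathcal{D}=\{\ZZ/p\}$ its members are abelian and detection is trivial (for a general $\mathcal{D}$ one takes detection for its members as a hypothesis).

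For a product $H=H_1'\times\dots\times H_N'$ with each factor in $\mathcal{C}_p(\mathcal{D})$ of strictly smaller order, I would use that every abelian subgroup of $H$ is contained in a product $A_1\times\dots\times A_N$ of abelian subgroups $A_i\subseteq H_i'$, together with the Künneth formula for stable cohomology, which is available precisely because each factor has detection (this is the Künneth formula already invoked in the proof of Proposition \ref{ptoroidal}). A class vanishing on all such products vanishes factorwise, hence vanishes, so $H$ has detection by the inductive hypothesis applied to the $H_i'$.

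The essential case is the wreath product $H=H'\wr\ZZ/p$ with base $N=(H')^p\trianglelefteq H$ of index $p$. Every abelian subgroup $A\subseteq H$ either lies in $N$ or surjects onto the quotient $\ZZ/p$; in the latter case, choosing $g\in A$ with nontrivial image gives $A\subseteq Z_H(g)$ with $\langle g\rangle$ surjecting onto $\ZZ/p$. Now $N=(H')^p$ lies in $\mathcal{C}_p(\mathcal{D})$ by closure under products, and $Z_H(g)$ lies in $\mathcal{C}_p(\mathcal{D})$ by Proposition \ref{pInductionWreath}; since no central element of $H$ surjects onto $\ZZ/p$, both groups have order $<|H|$ and hence have detection by the inductive hypothesis (structurally, $Z_H(g)$ is isoclinic to $Z_{H'}(g^p)\times\ZZ/p$ by Lemma \ref{lIsoclinismTypeCentralizersWreath}(b)). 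I would then reduce detection for $H$ to the following claim, the stable analogue of Nakaoka's theorem: if $\alpha\in H^*_{\mathrm{s}}(H,\ZZ/p)$ satisfies $\alpha|_N=0$ and $\alpha|_{Z_H(g)}=0$ for every $g$ surjecting onto $\ZZ/p$, then $\alpha=0$. Granting this, suppose $\alpha$ vanishes on all abelian subgroups of $H$; then, because $N$ and every $Z_H(g)$ have detection and their abelian subgroups are abelian subgroups of $H$, the restrictions $\alpha|_N$ and $\alpha|_{Z_H(g)}$ all vanish, whence $\alpha=0$ --- which is exactly detection for $H$.

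The main obstacle is the Nakaoka-type claim. I would approach it through the normal subgroup $N$ of index $p$, where the kernel of $\mathrm{res}^H_N$ is governed, via the Lyndon--Hochschild--Serre spectral sequence of $1\to N\to H\to\ZZ/p\to 1$ (or, geometrically, via the cyclic cover $\CC(V^{\oplus p})/\CC(V^{\oplus p})^{\ZZ/p}$ on which the stable cohomology of $H$ is modelled), by the $\ZZ/p$-action and by classes inflated from the quotient $\ZZ/p$. The content is that every such ``vertical'' class is seen after restriction to the subgroups in which an element surjecting onto $\ZZ/p$ is visible, the basic one being $Z_H(\sigma)=H'\times\ZZ/p$, the diagonal copy of $H'$ together with the permutation $\sigma$. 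Turning this into an \emph{unconditional} detection statement, rather than one valid only modulo nilpotents as in the classical Nakaoka/Quillen theory, is where the geometric features of stable cohomology --- residue maps and transfers for the invariant function field $\CC(V^{\oplus p})^{\ZZ/p}$ --- have to be used, and this is the step I expect to demand the most care.
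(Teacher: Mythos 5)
Your reduction is sound up to one point, but that point carries the entire content of the theorem. You correctly observe that detection by abelian subgroups is an isoclinism invariant (Theorem \ref{tComparisonIsoclinicToric} together with Proposition \ref{ptoroidal}(2)), that the product case follows from the K\"unneth formula once the factors have detection, and that Proposition \ref{pInductionWreath} supplies closure under centralizers; all of these are ingredients the paper also relies on. But your ``stable Nakaoka'' claim --- that a stable class on $H=H'\wr\ZZ/p$ vanishing on $(H')^p$ and on every centralizer $Z_H(g)$ of an element surjecting onto $\ZZ/p$ must vanish --- is exactly the step you leave open, and it does not follow from anything you have set up. You flag it yourself as ``the step I expect to demand the most care''; as written the argument is therefore incomplete at its core, and it is not even clear that the claim in this wreath-specific form is the right statement to aim for.

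The paper closes this gap by a different and more general mechanism: it proves no wreath-specific detection statement at all. Instead it invokes Lemma 1.5 of \cite{B-B11}, which (roughly) asserts that a stable class killed on all abelian subgroups is unramified, provided the stable cohomology of the centralizers of elements of order $p$ is detected by abelian subgroups in lower degrees; the induction is over the cohomological degree (base case: degree $1$, where a nontrivial character is nontrivial on a cyclic subgroup), Proposition \ref{pInductionWreath} guarantees that those centralizers remain in $\mathcal{C}_p(\mathcal{D})$, and condition (1) of Definition \ref{dClassCp} (stably rational generically free quotients, hence trivial higher unramified cohomology) then kills the resulting unramified class. Note that your proposal never actually uses the stable rationality built into $\mathcal{D}$, which is a symptom of the missing step: a group with nontrivial unramified cohomology in degree $\geq 2$ (e.g.\ with nontrivial unramified Brauer group) cannot have detection by abelian subgroups, so any proof must consume that hypothesis somewhere. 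A secondary, fixable issue: your induction on $|G|$ interacts awkwardly with operation (c), since isoclinic groups need not have comparable orders and a group may enter $\mathcal{C}_p(\mathcal{D})$ only through a chain passing via larger groups; the paper sidesteps this by inducting on the number of construction steps (for Proposition \ref{pInductionWreath}) and on the cohomological degree (for the theorem itself).
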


\begin{proof}
Here we use Lemma 1.5 of \cite{B-B11} inductively. Then everything follows from Proposition \ref{pInductionWreath}, saying that $\mathcal{C}_p (\mathcal{D})$ is closed under taking centralizers, and induction over the cohomological degree: note that all groups in $\mathcal{C}_p (\mathcal{D})$ have trivial higher unramified cohomology and that the stable cohomology $H^1_{\mathrm{s}} (G, \: \ZZ/p)$ of any finite group $G$ is detected by abelian subgroups: this is so because any nontrivial character $\chi \, :\, G \to \ZZ/p$ is nontrivial on a cyclic subgroup in $G$.
\end{proof}

\begin{corollary}\xlabel{cWreathAb}
The stable cohomology $H^*_{\mathrm{s}} (G_n, \: \ZZ /p)$ is detected by abelian subgroups. 
\end{corollary}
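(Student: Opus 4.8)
The plan is to deduce this directly from Theorem \ref{tMain} by choosing the base class $\mathcal{D}$ appropriately. I would take $\mathcal{D} = \{ \ZZ/p \}$ to consist of the single group $\ZZ/p$, and first check that this $\mathcal{D}$ satisfies the two standing hypotheses of Definition \ref{dClassCp}. Condition (1), that generically free linear quotients be stably rational, holds for $\ZZ/p$ since quotients of linear representations by a cyclic group are rational (indeed, for any finite abelian group acting linearly one has rationality of $V/G$ in this setting). Condition (2) is immediate: since $\ZZ/p$ is abelian, the centralizer of any of its elements is all of $\ZZ/p$, which again lies in $\mathcal{D}$.

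Next I would verify that $G_n$ belongs to $\mathcal{C}_p(\mathcal{D})$ by induction on the number $n$ of wreath factors. For $n=1$ we have $G_1 = \ZZ/p \in \mathcal{D} \subseteq \mathcal{C}_p(\mathcal{D})$. For the inductive step, note that by definition $G_{n+1} = G_n \wr \ZZ/p$, so if $G_n \in \mathcal{C}_p(\mathcal{D})$ then $G_{n+1}$ is obtained from it by a single application of operation (a) in Definition \ref{dClassCp}, and hence $G_{n+1} \in \mathcal{C}_p(\mathcal{D})$ as well. This is precisely the remark, recorded just after Definition \ref{dClassCp}, that $G_n$ lies in $\mathcal{C}_p$ when $\mathcal{D}$ is taken to be $\{\ZZ/p\}$.

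With $G_n \in \mathcal{C}_p(\mathcal{D})$ established, the conclusion is then immediate from Theorem \ref{tMain}, which asserts detection by abelian subgroups for every group in $\mathcal{C}_p(\mathcal{D})$. In this sense there is no genuine obstacle remaining: all the substantive work has already been absorbed into Proposition \ref{pInductionWreath} (closure of $\mathcal{C}_p(\mathcal{D})$ under centralizers, via the isoclinism analysis of centralizers in wreath products in Lemma \ref{lIsoclinismTypeCentralizersWreath}) and into Theorem \ref{tMain} itself. The only point requiring any verification is the bookkeeping above, namely that the chosen singleton base class is admissible and that $G_n$ is reachable from it using only the wreath-product operation (a).
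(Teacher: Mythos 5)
Your proposal is correct and is essentially identical to the paper's proof, which likewise takes $\mathcal{D} = \{\ZZ/p\}$ and invokes Theorem \ref{tMain}; the extra verifications you supply (admissibility of the singleton base class and reachability of $G_n$ via operation (a)) are exactly the bookkeeping the paper leaves implicit in its remark following Definition \ref{dClassCp}.
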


\begin{proof}
Take $\mathcal{D} = \{ \ZZ/p \}$ and apply Theorem \ref{tMain}.
\end{proof}

The following result allows us to determine $H^*_{\mathrm{s}} (G_n, \: \ZZ /p)$ rather precisely. It follows from Corollary \ref{cWreathAb}, but requires some additional work.

\begin{theorem}\xlabel{tWreathElAb}
The stable cohomology $H^*_{\mathrm{s}} (G_n, \: \ZZ /p)$ is detected by elementary abelian subgroups.
\end{theorem}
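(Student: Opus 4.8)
The plan is to upgrade the detection by abelian subgroups of Corollary \ref{cWreathAb} to detection by elementary abelian subgroups, the issue being to understand, for each abelian $p$-subgroup $A\subset G_n$, how the restriction $H^*_{\mathrm s}(G_n,\ZZ/p)\to H^*_{\mathrm s}(A,\ZZ/p)$ can be recovered from elementary abelian subgroups. First I would record the stable cohomology of a cyclic group: for $C=\ZZ/p^k$ a faithful one-dimensional representation has $V^L/C\simeq\CC^{\ast}$ (via $z\mapsto z^{p^k}$), so $H^*_{\mathrm s}(\ZZ/p^k,\ZZ/p)=\Lambda(u)$ is exterior on a single degree-one class $u$, the reduction of the tautological character. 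Equivalently, in the limit one lands in the Galois cohomology of $\CC(V)^C$, which is generated in degree one and on which the Bockstein vanishes, since $\CC$ is algebraically closed (so $(\zeta_p)=0$ and degree-one classes square to zero, $-1$ being a $p$-th power). As the stable cohomology of $G_n$ is detected by abelian subgroups, a K\"unneth formula is available, exactly as in the proof of Proposition \ref{ptoroidal}; therefore for any finite abelian $p$-group $A$ one gets $H^*_{\mathrm s}(A,\ZZ/p)=\Lambda(\mathrm{Hom}(A,\ZZ/p))$, which depends only on the elementary abelian quotient $A/pA$.

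This computation isolates the whole difficulty. If $A$ is already elementary abelian there is nothing to do, and the only obstruction comes from cyclic factors $\ZZ/p^k$ with $k\ge 2$: the generator $u$ of $\Lambda(u)=H^*_{\mathrm s}(\ZZ/p^k,\ZZ/p)$ restricts to \emph{zero} on the $p$-torsion subgroup $C[p]\cong\ZZ/p$, so detection on the elementary abelian subgroups \emph{contained in} $A$ fails, and one is forced to produce detecting elementary abelian subgroups elsewhere in $G_n$. In degree one this is possible: I would check by induction on $n$ that $G_n^{\mathrm{ab}}=(\ZZ/p)^n$ is elementary abelian (the cyclic permutation makes the coinvariants of $(G_{n-1}^{\mathrm{ab}})^p$ equal to the diagonal $G_{n-1}^{\mathrm{ab}}$), and that the images of the order-$p$ elements of $G_n$ generate $G_n^{\mathrm{ab}}$ (diagonally embedded order-$p$ elements of $G_{n-1}$ together with a top generator $((\mathrm{id},\dots,\mathrm{id}),\sigma)$ suffice). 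Hence every nonzero character of $G_n$ is nonzero on some order-$p$ element, and $H^1_{\mathrm s}(G_n,\ZZ/p)$ is detected by elementary abelian subgroups.

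For the higher degrees I would induct exactly as in Proposition \ref{pInductionWreath}, feeding the centralizer mechanism underlying Theorem \ref{tMain} with the refined family of elementary abelian subgroups: detection by this family for $G$ is reduced to detection by it for the element-centralizers $Z_G(h)$ together with the vanishing of higher unramified cohomology. By Lemma \ref{lIsoclinismTypeCentralizersWreath} each such centralizer in $G_n=G_{n-1}\wr\ZZ/p$ is, up to the operations of Definition \ref{dClassCp}, a product of centralizers in $G_{n-1}$ (case (a)), a wreath product $Z_{G_{n-1}}(x')\wr\ZZ/p$ (case (c)), or a group \emph{isoclinic} to $Z_{G_{n-1}}(x^{p})\times\ZZ/p$ (case (b)). Cases (a) and (c) involve only strictly simpler groups in the construction, together with finite products and a further $(-)\wr\ZZ/p$, and are controlled by the inductive hypothesis and the K\"unneth decomposition.

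The main obstacle is case (b). Here detection by abelian subgroups is transported across the isoclinism by Proposition \ref{ptoroidal}(2), but detection by \emph{elementary} abelian subgroups is \emph{not} an isoclinism invariant --- indeed $\ZZ/p^2$ and $(\ZZ/p)^2$ are isoclinic yet have different stable cohomology --- so the step cannot be closed formally, and this is precisely the ``additional work'' beyond Corollary \ref{cWreathAb}. Concretely, $Z_{G_n}(x)$ is a central extension $1\to\langle a\rangle\to Z_{G_n}(x)\to Z_{G_{n-1}}(a)/\langle a\rangle\times\ZZ/p\to 1$ by a cyclic group $\langle a\rangle$, toroidally related to the split extension $Z_{G_{n-1}}(a)\times\ZZ/p$. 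My plan is to exploit the explicit description from the proof of Lemma \ref{lIsoclinismTypeCentralizersWreath}(b): after passing to $\QQ/\ZZ$-coefficients the element $a$ becomes a $p$-th power of a central element, so the single exterior generator $u_a$ contributed by the $\langle a\rangle$-direction is, up to this divisible correction, the reduction of a character that is nonzero on an order-$p$ element, hence detected on an elementary abelian subgroup by the degree-one analysis above. Combining this with the inductive detection for $Z_{G_{n-1}}(a)$ and the K\"unneth splitting should show that every nonzero class surviving on $Z_{G_n}(x)$ already survives on an honest elementary abelian subgroup of $G_n$, closing the induction; verifying that the restriction maps are compatible through the toroidal correction is the delicate point.
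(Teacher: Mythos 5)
Your opening analysis is sound (the computation $H^*_{\mathrm s}(\ZZ/p^k,\ZZ/p)=\Lambda(u)$, the observation that $u$ dies on the $p$-torsion of $\ZZ/p^k$, and the degree-one case for $G_n$ are all correct, and you have correctly located the difficulty in the cyclic subgroups of order $>p$). But the inductive mechanism you propose does not close, and the gap is not confined to case (b) as you suggest. The residue/centralizer reduction behind Theorem \ref{tMain} works as follows: a stable class $\alpha$ of degree $k>1$ with trivial unramified part has a nontrivial residue $\partial_h\alpha\in H^{k-1}_{\mathrm s}(Z_G(h),\ZZ/p)$ at some element $h$, and if $\partial_h\alpha$ is detected on a subgroup $A\subset Z_G(h)$ then $\alpha$ is detected on $\langle A,h\rangle$. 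This group is abelian because $A$ centralizes $h$ --- which is why the mechanism propagates detection by \emph{abelian} subgroups --- but it is elementary abelian only if $h$ has order $p$, and $G_n$ contains elements of order up to $p^n$. Replacing $h$ by $h^{p^{k-1}}$ does not help, since the character $\chi_h$ entering the decomposition $\alpha|_{Z_G(h)}=\alpha'+\chi_h\cup\partial_h\alpha$ vanishes on $\langle h^{p^{k-1}}\rangle$. So the reduction ``elementary abelian detection for $G$ follows from elementary abelian detection for the $Z_G(h)$'' is exactly the unproven step, in all three cases of Lemma \ref{lIsoclinismTypeCentralizersWreath}, not only in case (b); and your treatment of case (b) itself (``up to this divisible correction\dots should show\dots the delicate point'') is not yet an argument, especially since, as you note yourself, elementary abelian detection is not an isoclinism invariant, so Proposition \ref{ptoroidal}(2) cannot be invoked.

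The paper takes a different and more concrete route, and it is worth seeing why, because it supplies precisely the ingredient your outline lacks. It reduces the theorem to the claim that every nontrivial stable class of degree $>1$ already restricts nontrivially to $G_{n-1}^p$. A putative counterexample is, by Corollary \ref{cWreathAb}, detected on an abelian $A$ surjecting onto the top $\ZZ/p$, hence lives on a subgroup $B\wr\ZZ/p$ with $B$ abelian. Nakaoka's theorem splits $H^*(B\wr\ZZ/p,\ZZ/p)$ into $H^*(B^p,\ZZ/p)^{\ZZ/p}$ plus contributions $H^k(\ZZ/p,T_i)$ from trivial modules generated by norms $\beta\otimes\cdots\otimes\beta$; an explicit toric stabilization through $B^s\rtimes\ZZ$ shows only $k=1$ can survive, and then the key point is a \emph{dimension count}: such a class of degree $1+pb$ (with $b=\deg\beta>0$) is induced from $(\ZZ/p)^b\wr\ZZ/p$, which has a faithful representation of dimension $pb$, so the class dies under stabilization. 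Nothing in your proposal plays the role of this dimension argument, and without it (or a substitute) the classes $\tau\cup(\beta\otimes\cdots\otimes\beta)$ --- which are nontrivial on abelian subgroups containing elements of order $p^2$ but on no elementary abelian subgroup a priori --- are not ruled out. I would recommend abandoning the centralizer induction for this statement and instead analyzing the subgroups $B\wr\ZZ/p$ directly.
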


\begin{proof}
It will be sufficient to prove:
\begin{quote}
Every nontrivial class $\alpha \in H^k_{\mathrm{s}} (G_n, \: \ZZ/p)$, $k>1$, is nontrivial on the subgroup $G_{n-1}^p\subset G_{n-1}\wr \ZZ/p = G_n$.
\end{quote}
Then the assertion of Theorem \ref{tWreathElAb} will follow by induction and the fact that $H^1_{\mathrm{s}} (G_n, \: \ZZ/p)$ is always detected by elementary abelian subgroups: in fact, every nontrivial character $\chi\, : \, G_n \to \ZZ/p$ is nontrivial on $G_{n-1}^p$ or else nontrivial on the quotient $\ZZ/p$.\\
By Theorem \ref{cWreathAb} a class $\alpha$ as above which is trivial on $G_{n-1}^p$ must be nontrivial on some abelian subgroup $A$ which surjects onto $\ZZ/p$ under the composite map $A\hookrightarrow G_n \twoheadrightarrow \ZZ/p$. Then $A$ is contained in a subgroup $B\wr \ZZ/p \subset G_n$ where $B^p \subset G_{n-1}^p$ is abelian: we can take for $B$ the image of $A\cap G_{n-1}^p$ in $G_{n-1}$ under any of the coordinate projections $G_{n-1}^p \to G_{n-1}$. Note that if $x = ((a, \: 1, \: 1, \dots , \: 1), \: \sigma )$ is an element in $A$ such that $\langle x \rangle$ surjects onto $\ZZ/p$ and if $b=((b_1, \dots , \: b_p ), \: 1) \in A \cap G_{n-1}^p$, then the equation $x b x^{-1}=b$ together with the fact that $a$ commutes with every $b_i$ implies that $b_1=\dots = b_p$, so all coordinate projections are the same.\\
Thus we get a nontrivial class in $H^k_{\mathrm{s}} (B \wr \ZZ/p, \: \ZZ/p)$ which with slight abuse of notation we denote again by $\alpha$. We have to recall some results about the structure of the cohomology of $B\wr \ZZ/p$: by Nakaoka's Theorem (see \cite{Evens}) one has an isomorphism
\[
H^*(B \wr \ZZ/p, \: \ZZ/p) \simeq H^* (\ZZ/p, \: H^*(B, \: \ZZ/p)^{\otimes p})
\]
where we consider the cohomology $H^* (B, \: \ZZ/p)^{\otimes p}$ as a $\ZZ/p[\ZZ/p]$-module (with nontrivial action). However, $H^* (B, \: \ZZ/p)^{\otimes p}$ is a direct sum of trivial $\ZZ/p[\ZZ/p]$-modules and free $\ZZ/p[\ZZ/p]$-modules (see \cite{A-M}, p. 117). The trivial modules are generated by norm elements $x\otimes \dots \otimes x \in H^*(B, \: \ZZ/p)^{\otimes p}$, $x\in H^*(B, \: \ZZ/p)$. The free modules do not contribute to the cohomology. Let $b_1, \: b_2, \dots$ be a basis for $H^*(B, \: \ZZ/p)$. Hence there is a natural splitting
\begin{gather*}
H^* (B \wr \ZZ/p, \: \ZZ/p) = H^0 (\ZZ/p, \: H^*(B, \: \ZZ/p)^{\otimes p}) \oplus \bigoplus_{k>0, \: i} H^k (\ZZ/p, \: T_{i})\\
\simeq H^* (B^p, \: \ZZ/p)^{\ZZ/p}\oplus \bigoplus_{k>0, \: i} H^k (\ZZ/p, \: T_{i})\, ,
\end{gather*}
the direct sum running over all the trivial modules $T_{i}$, generated by $b_i\otimes \dots \otimes b_i$, which occur.\\
Consider now a faithful toric representation $R_B$ for $B$ with open part $R_B^o$ where the $B$ action is free that stabilizes the cohomology of $B$. We construct the faithful $B\wr \ZZ/p$ representation $R_B^p \oplus \CC$ where $\ZZ/p$ acts via a $p$-th root of unity in $\CC$ and rotates the copies of $R_B$. This has an open toric free part $(R_B^o)^p \times \CC^{\ast}$ and the quotient $Q:=((R_B^o)^p \times \CC^{\ast})/ (B\wr \ZZ/p)$ has the structure of a torus fibration with fibre $(R_B^o)^p/B^p$ over $\CC^{\ast } \simeq (\CC^{\ast } )/ (\ZZ /p)$. The fundamental group $\pi_1 (Q)$ yields a partial stabilization for the cohomology of $B\wr \ZZ/p$ and is of the form $B^s \rtimes \ZZ$ where $B^s = \pi_1 (R_B^o/B)$ and stabilizes the cohomology of $B$. We consider the image of the cohomology of $B\wr \ZZ/p$ in the cohomology of $B^s \wr \ZZ/p$. That map can be factored
\[
\begin{CD}
H^* (B \wr \ZZ/p, \: \ZZ/p ) @>{f_1}>> H^* (B^s \wr \ZZ/p, \: \ZZ/p ) @>{f_2}>> H^* (B^s \rtimes \ZZ , \: \ZZ/p)
\end{CD}
\]
and the cohomologies of $B^s\wr \ZZ/p$ and also of $B^s\rtimes \ZZ$ can be described analogously to what was said above: first, clearly,
\[
H^* (B^s \wr \ZZ/p, \: \ZZ/p) \simeq H^* (\ZZ/p, \: H^*(B^s, \: \ZZ/p)^{\otimes p})
\]
and the description is entirely the same as before. Now for $B^s \rtimes \ZZ$ we also have, see \cite{Evens}, discussion on page 19 and proof of Theorem 5.3.1, that
\[
H^* (B^s \wr \ZZ , \: \ZZ/p ) \simeq H^* (\ZZ , \: H^* (B^s, \: \ZZ/p)^{\otimes p})
\]
where now we consider $H^* (B^s, \: \ZZ/p)^{\otimes p}$ as a $\ZZ/p [\ZZ]$ module via the quotient map $\ZZ \to \ZZ/p$. In particular, the free $\ZZ/p[\ZZ/p]$-submodules of $H^*(B^s, \: \ZZ/p)^{\otimes p}$ may contribute to the cohomology of $H^* (\ZZ , \: H^* (B^s, \: \ZZ/p)^{\otimes p})$ now (but those classes do not come from $H^* (B \wr \ZZ/p, \: \ZZ/p )$). The maps $f_1$ and $f_2$ have a natural description using the previous isomorphisms: the map $f_1$ is just induced by the map of coefficients $H^*(B, \: \ZZ/p)^{\otimes p} \to H^*(B^s, \: \ZZ/p)^{\otimes p}$ (which is a stabilization map for the cohomology of $B^p$), and $f_2$ is induced by the natural surjection of groups $\ZZ \twoheadrightarrow \ZZ/p$.\\
From this description we see that if a class $\alpha$ is in the subspace $H^* (B^p, \: \ZZ/p)^{\ZZ/p}$ and stable, then it is detected already on $B^p$. Moreover, the classes in $H^k (\ZZ/p, \: T_{i})$ can only be stable if $k=1$ and $T_{i}$ is generated by $\beta \otimes \dots \otimes \beta$ with $\beta$ stable (and part of the chosen basis for $H^* (B, \: \ZZ/p)$).  Let $\mathrm{deg} (\beta ) =: b$, and let us show that in fact all classes $\alpha =\tau \cup (\beta \otimes \dots \otimes \beta )$ with $b>0$ and $\tau$ some generator of $H^1 (\ZZ/p, \: \ZZ/p)$ are unstable in $H^* (B\wr \ZZ/p, \: \ZZ/p)$, which will prove Theorem \ref{tWreathElAb}. The degree of $\alpha$ is $1 + pb$. However, $\alpha$ is then induced from a class $\alpha'$ in $E \wr \ZZ/p$, where $E \simeq (\ZZ/p)^b$ is elementary abelian, via some surjection $B \twoheadrightarrow E$: in fact, we may assume $\beta$ is a monomial in $e_1, \dots , e_r$, the latter being some basis of $H^1 (B, \: \ZZ/p)$ and then the surjection is just a coordinate projection followed by reduction to $\ZZ/p$. If $\alpha$ were stable, then $\alpha'$ would be stable. This is however clearly not so if $b>0$ for in that case $E\wr \ZZ/p$ has a faithful representation of dimension $pb$ (let each standard copy of $\ZZ/p$ in $E \simeq \ZZ/p \times \dots \times \ZZ/p$ act on $\CC$ via a nontrivial character and let $\ZZ/p$ rotate those copies). Hence all classes in the cohomology of $E\wr \ZZ/p$ of degrees $> pb$ are killed under stabilization for dimension reasons. 
\end{proof}

We have already proven a lot more, but let us record the easy

\begin{corollary}\xlabel{cWreathElAb}
The stable cohomology $H^*_{\mathrm{s}} (G_n, \: \ZZ/p)$ is detected by the two subgroups $G_{n-1}^p$ and $G_{n-1}\times \ZZ/p$.
\end{corollary}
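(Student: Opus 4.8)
The plan is to harvest the two cases that the proof of Theorem \ref{tWreathElAb} has, in effect, already separated, and to observe that $G_{n-1}^p$ accounts for everything in degrees $k>1$, while the second subgroup $G_{n-1}\times \ZZ/p$ is needed only to catch certain characters in degree $1$. Concretely, the proof of Theorem \ref{tWreathElAb} establishes the displayed key claim: every nontrivial stable class $\alpha \in H^k_{\mathrm{s}}(G_n, \: \ZZ/p)$ with $k>1$ is nontrivial on $G_{n-1}^p$. Since $G_{n-1}^p$ is one of the two listed subgroups, detection in all degrees $>1$ is immediate and requires no further argument.

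It thus remains to treat $k=1$. As used in the proofs above, the nontrivial classes in $H^1_{\mathrm{s}}(G_n, \: \ZZ/p)$ are exactly the nontrivial characters $\chi\, :\, G_n \to \ZZ/p$, so I would first record the abelianization of the wreath product, $G_n^{\mathrm{ab}} \simeq G_{n-1}^{\mathrm{ab}} \times \ZZ/p$, where the first factor arises as the coinvariants of the base group $G_{n-1}^p$ under the permutation action and the second is the top quotient. A nontrivial character $\chi$ then has a nontrivial component on at least one of the two factors. If its $G_{n-1}^{\mathrm{ab}}$-component $\psi$ is nontrivial, then $\chi$ restricted to $G_{n-1}^p$ is the composite of the coinvariants map $G_{n-1}^p \to G_{n-1}^{\mathrm{ab}}$ with $\psi$, which is visibly nontrivial (evaluate on $(h, 1, \dots, 1)$ with $\psi(h)\neq 0$); so $\chi$ is detected on $G_{n-1}^p$. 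If instead the $\ZZ/p$-component is nontrivial, then $\chi$ is pulled back from the quotient $\ZZ/p$ and is detected on the top copy of $\ZZ/p$ inside $G_{n-1}\times \ZZ/p$, which maps isomorphically onto that quotient.

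The reason the second subgroup $G_{n-1}\times \ZZ/p$ cannot be dispensed with is precisely that a character pulled back from the top quotient $\ZZ/p$ vanishes identically on the base $G_{n-1}^p$, since $G_{n-1}^p$ lies in the kernel of $G_n \twoheadrightarrow \ZZ/p$; such a class can never be seen on $G_{n-1}^p$ alone. There is no serious obstacle here: all the substantive work is inherited from Theorem \ref{tWreathElAb}, and what remains is the elementary bookkeeping of the degree-$1$ case together with the observation that the diagonal $G_{n-1}$ and the top $\ZZ/p$ commute, so that $G_{n-1}\times \ZZ/p$ is genuinely a subgroup of $G_n$ in which the $\ZZ/p$ factor is a section of the quotient.
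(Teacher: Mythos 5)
Your proof is correct, but it takes a genuinely different route from the paper's. The paper deduces the corollary in one line from Theorem \ref{tWreathElAb} together with an external input: M\`ui's result (\cite{Mui}, p.~349) that every maximal elementary abelian $p$-subgroup of $G_n$ is contained in $G_{n-1}^p$ or in $G_{n-1}\times \ZZ/p$, so detection by elementary abelian subgroups immediately gives detection by these two subgroups. You instead bypass the classification of elementary abelian subgroups entirely and reuse the displayed key claim inside the proof of Theorem \ref{tWreathElAb} --- that every nontrivial stable class of degree $k>1$ is already nontrivial on $G_{n-1}^p$ --- supplementing it with an elementary degree-$1$ analysis via $G_n^{\mathrm{ab}}\simeq G_{n-1}^{\mathrm{ab}}\times \ZZ/p$. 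Your route is self-contained (no citation of M\`ui) and actually records the sharper fact, which the authors allude to with ``we have already proven a lot more,'' that $G_{n-1}^p$ alone detects everything in degrees $>1$ and the second subgroup is needed only for characters pulled back from the top quotient $\ZZ/p$; the paper's route buys brevity and, via the elementary abelian classification, meshes directly with the statement of Theorem \ref{tWreathElAb} rather than with its proof. Both arguments are sound; your degree-$1$ bookkeeping (the coinvariants map $G_{n-1}^p\to G_{n-1}^{\mathrm{ab}}$ evaluated on $(h,1,\dots,1)$, and the diagonal copy of $G_{n-1}$ commuting with the permuting $\ZZ/p$) checks out.
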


\begin{proof}
It is known (\cite{Mui}, p.349) that every maximal elementary abelian $p$-subgroup of $G_n$ is contained in $G_{n-1}^p$ or $G_{n-1}\times \ZZ/p$. 
\end{proof}

We can now also say very precisely how the cohomology ring $H^*_{\mathrm{s}}(G_n \: \ZZ/p)$ is structured. 

\begin{theorem}\xlabel{tFinerStrWreath}
The stable cohomology $H^*_{\mathrm{s}} (G_n, \ZZ/p)$ is determined inductively as follows: there is an isomorphism
\[
i\, :\, H^*_{\mathrm{s}} (G_n, \: \ZZ/p) \simeq H^* (G_{n-1}^p, \: \ZZ/p )^{\ZZ/p} \oplus H^1(\ZZ/p, \: \ZZ/p)
\]
where $i = (i_1, \: i_2)$ and $i_1$ is the restriction map $\mathrm{res}^{G_n}_{G_{n-1}^p}$ and $i_2$ is the restriction map to the subgroup $\ZZ/p$ in $G_n = G_{n-1} \wr \ZZ/p$.
\end{theorem}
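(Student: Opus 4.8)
The plan is to upgrade the qualitative detection result of Theorem \ref{tWreathElAb} into the stated direct-sum decomposition, keeping Nakaoka's theorem as the backbone. For $G_n = G_{n-1}\wr \ZZ/p$ it supplies
\[
H^*(G_n, \: \ZZ/p) \simeq H^*(\ZZ/p, \: H^*(G_{n-1}, \: \ZZ/p)^{\otimes p}),
\]
and, decomposing the coefficient module $M := H^*(G_{n-1}, \: \ZZ/p)^{\otimes p}$ into trivial and free $\ZZ/p[\ZZ/p]$-summands exactly as in the proof of Theorem \ref{tWreathElAb}, one gets the splitting
\[
H^*(G_n, \: \ZZ/p) \simeq M^{\ZZ/p} \oplus \bigoplus_{k>0, \: i} H^k(\ZZ/p, \: T_i),
\]
where $M^{\ZZ/p} = H^0(\ZZ/p, \: M) = H^*(G_{n-1}^p, \: \ZZ/p)^{\ZZ/p}$ and the $T_i$ are the trivial summands sitting in positive cohomological degree. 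The first summand is precisely the image of $i_1 = \mathrm{res}^{G_n}_{G_{n-1}^p}$: restriction to the normal subgroup $G_{n-1}^p$ automatically lands in the invariants, and since the Lyndon--Hochschild--Serre spectral sequence degenerates (Nakaoka), the edge homomorphism, i.e. this restriction, is surjective onto all of $M^{\ZZ/p}$.

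First I would pin down the two components of $i$ at the level of ordinary cohomology. Under the splitting, $i_1$ is the projection onto $M^{\ZZ/p}$ followed by the identification $H^0(\ZZ/p, \: M) = M^{\ZZ/p}$. For $i_2$ I would use a splitting $\ZZ/p \hookrightarrow G_n$ of $G_n \twoheadrightarrow \ZZ/p$: the pullback of the generator $\tau$ of $H^1(\ZZ/p, \: \ZZ/p)$ restricts trivially to $G_{n-1}^p$ (which lies in the kernel of $G_n \twoheadrightarrow \ZZ/p$) and restricts to a generator on the complementary $\ZZ/p$. Hence $i_2$ is an isomorphism on the line $H^1(\ZZ/p, \: T_0)$ spanned by $\tau \otimes 1^{\otimes p}$ and kills $M^{\ZZ/p}$, while $i_1$ kills this line; so on ordinary cohomology the two restrictions are complementary on the two pieces $M^{\ZZ/p}$ and $\langle \tau \rangle$.

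Next I would pass to stable cohomology, quoting the stability analysis already carried out inside the proof of Theorem \ref{tWreathElAb}. Among the classes in $\bigoplus_{k>0, \: i} H^k(\ZZ/p, \: T_i)$, the only survivors under stabilization are those with $k=1$ attached to a \emph{degree-zero} norm element, i.e. exactly the line $\langle \tau \rangle = H^1(\ZZ/p, \: \ZZ/p)$; every class with $k\geq 2$, or attached to a positive-degree norm element $\beta^{\otimes p}$, is unstable by the dimension argument given there. For the first summand, the geometric stabilization from that proof (the representation $R_{G_{n-1}}^{p}\oplus \CC$ and the associated torus fibration $Q$, whose fundamental group provides a partial stabilization built from that of $G_{n-1}^p$) identifies the surviving part of $M^{\ZZ/p}$ with the image of the stabilization map of $G_{n-1}^p$ intersected with the invariants, i.e. with the $\ZZ/p$-invariant stable classes of $G_{n-1}^p$; this is the object denoted $H^*(G_{n-1}^p, \: \ZZ/p)^{\ZZ/p}$ in the statement, realized concretely as $\mathrm{im}(i_1)$. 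Thus a class on $G_n$ is stable if and only if each of its two Nakaoka components is, and $i=(i_1, \: i_2)$ descends to $H^*_{\mathrm{s}}(G_n, \: \ZZ/p)$ with the asserted target.

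Injectivity of the induced map is then immediate: a stable class with $i_1=0$ lies in the second summand, whose surviving part is $\langle \tau\rangle$, on which $i_2$ is an isomorphism, so $i_2=0$ forces the class to vanish. Surjectivity follows because $i_2$ hits $H^1(\ZZ/p, \: \ZZ/p)$ through the pulled-back $\tau$, while the Nakaoka section, once shown to preserve stability, lifts each invariant stable class of $G_{n-1}^p$ to a stable class of $G_n$ restricting to it. The hard part will be precisely this compatibility between the \emph{geometric} stabilization (representations and torus bundles) and the \emph{algebraic} Nakaoka splitting: one must check that stable classes decompose along the two summands with no cross terms and, most delicately, that the first summand surjects onto the \emph{full} space of $\ZZ/p$-invariant stable classes of $G_{n-1}^p$ rather than a proper subspace. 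Once this is established, the remaining verifications are routine bookkeeping with $i_1$ and $i_2$.
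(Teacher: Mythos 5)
Your argument is correct in outline, and it is in fact the route the paper explicitly acknowledges in the first line of its proof (``this follows rather easily from the proof of Theorem \ref{tWreathElAb}'') but then deliberately does not write out; the proof actually given is different. You reuse the Nakaoka splitting together with the geometric stabilization from Theorem \ref{tWreathElAb} (the torus fibration $Q$, the groups $B^s\rtimes\ZZ$, and the induction from $E\wr\ZZ/p$ for the dimension count), and you correctly isolate the two delicate points: that stabilization is compatible with the Nakaoka direct sum (no cross terms), and that $i_1$ surjects onto the full space of invariant stable classes of $G_{n-1}^p$. The paper instead invokes Steenrod's total power $\Gamma$: the generators of the summands $H^k(\ZZ/p,\:T_x)$ are realized as explicit classes $\theta_i\cup\Gamma(x)$, functorially and compatibly with cup products, and the restriction formula $\Gamma(x)\vert_{G_{n-1}\times\ZZ/p}=\sum_j D^j(x)\cup g_j$ with $D^j=\pm P^s$ or $\pm\beta P^s$, combined with the vanishing of Steenrod powers and the Bockstein in stable cohomology (a Bloch--Kato input) and with the detection statement of Corollary \ref{cWreathElAb}, kills all mixed classes except the pure $\theta_i\in H^1(\ZZ/p,\:\ZZ/p)$ in one stroke; the surjectivity of $i_1$ is then settled by the explicit norm/trace decomposition of $H^*(G_{n-1}^p,\:\ZZ/p)^{\ZZ/p}$, with norms $x\otimes\dots\otimes x$ stable iff $x$ is and traces $\sum_i\sigma^i(x_1\otimes\dots\otimes x_p)$ stable iff all $x_i$ are. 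What your route buys is economy (nothing beyond the machinery already set up for Theorem \ref{tWreathElAb}); what the paper's route buys is precisely a clean, structural answer to the two compatibility questions you flag as ``the hard part,'' so you should either import the norm/trace analysis or carry out the check that the $B^s\rtimes\ZZ$ stabilization respects the splitting before declaring the bookkeeping routine.
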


\begin{proof}
This follows rather easily from the proof of Theorem \ref{tWreathElAb}, but let us give another proof using Steenrod operations and the Bloch-Kato conjecture to show how everything falls into place.

\

By Steenrod's description of the cohomology of wreath products (see e.g. \cite{A-M}, IV.4 and IV.7) one has
\[
H^*(G_{n-1} \wr \ZZ/p) = H^* (G_{n-1}^p, \: \ZZ/p)^{\ZZ/p} + \{ \Gamma (\alpha ) \cup \theta_i \} 
\]
where $H^* (G_{n-1}^p, \: \ZZ/p)^{\ZZ/p}$ is the image of the restriction map to $G_{n-1}^p$ and $\alpha \in H^k (G_{n-1}, \: \ZZ/p)$, $\Gamma (\alpha ) \in H^{kp} (G_{n-1} \wr \ZZ/p , \: \ZZ/p)$ is the total Steenrod power $\Gamma$ applied to $\alpha$, and $\theta_i$ are some classes in $H^i (\ZZ/p, \ZZ/p)$ with $i\ge 1$. A little more conceptionally, one can say that by Nakaoka's Theorem, as above, one has an isomorphism
\[
H^*(G_{n-1} \wr \ZZ/p, \: \ZZ/p) \simeq H^* (\ZZ/p, \: H^*(G_{n-1}, \: \ZZ/p)^{\otimes p})
\]
and there is a natural splitting
\begin{gather*}
H^* (G_{n-1} \wr \ZZ/p, \: \ZZ/p) = H^0 (\ZZ/p, \: H) \oplus \bigoplus_{k>0,\: T_{x}} H^k (\ZZ/p, \: T_{x})\\
\simeq H^* (G_{n-1}^p, \: \ZZ/p)^{\ZZ/p}\oplus \bigoplus_{k>0, \: T_{x}} H^k (\ZZ/p, \: T_{x})\, ,
\end{gather*}
the direct sum running over all the trivial modules $T_{x}$, generated by $x\otimes \dots \otimes x$, which occur. The point of the total Steenrod power $\Gamma$ is then that it realizes the generator of $H^k (\ZZ/p, \: T_{x})$ as an explicit class $\theta_i\cup \Gamma (x)$ in the cohomology of $G_n$ (here $\theta_i \in H^k (\ZZ/p, \: \ZZ/p)$) in a way that is compatible with cup products and functorial for group homomorphisms. We still need to remark that $\Gamma (x )$ restricts to $H^* (G_{n-1}^p, \: \ZZ/p)$ as $x\otimes \dots \otimes x$ and its restriction to $G_{n-1}\times \ZZ/p$ is of the form
\[
\sum_j D^j (x ) \cup g_j
\]
where, keeping in mind $H^* (G_{n-1}\times \ZZ/p, \: \ZZ/p) = H^* (G_{n-1}, \: \ZZ/p) \otimes H^* (\ZZ/p, \: \ZZ/p)$, the element $g_j$ is a generator of $H^j (\ZZ/p, \: \ZZ/p )$ and $D^j (x)$ is a certain class in $H^{\mathrm{deg}(x)p-j} (G_{n-1}, \: \ZZ/p)$. Here $D^j (x)$ is, if nontrivial, equal -up to a sign- to a Steenrod power $P^s (x)$ or $\beta P^s (x)$, where $\beta$ is the Bockstein operator. We do not need the exact formula (which is in \cite{A-M}, p. 184, (1.12)), but just that, due to the fact that the Steenrod operations $P^s$ and the Bockstein are zero in stable cohomology, $D^j (x)\cup g_j$ can just be nontrivial in stable cohomology if $D^j$ is the identity up to a sign and $j\le 1$. Which means $\mathrm{deg}(x)p-j = \mathrm{deg}(x)$, and this can happen for odd primes $p$ only when $x$ is in degree $0$. Hence we see that elements of the form $\Gamma (\alpha ) \cap \theta_i$ give nontrivial classes in stable cohomology (in view of Corollary \ref{cWreathElAb}, which of course we use all the time now) if and only if  this is in fact just a class $\theta_i$ coming from $H^1 (\ZZ/p, \: \ZZ/p )$.\\
Moreover the elements in $H^* (G_{n-1}^p, \: \ZZ/p)^{\ZZ/p}$ we are getting are exactly of two types: norms and traces; norms being elements of the form $x\otimes \dots \otimes x$, $x\in H^* (G_{n-1}, \: \ZZ/p)$ (these are stable if and only if $x$ is stable) and traces being of the form
\[
\sum_{i=0}^p \sigma^i (x_1 \otimes \dots \otimes x_p)
\]
$x_1\otimes \dots \otimes x_p\in H^* (G_{n-1}^p, \: \ZZ/p)$ and $\sigma$ is a cyclic shift. These are stable if and only if all the $x_i$ are stable.
\end{proof}

\section{Applications to finite groups of Lie type}\xlabel{sLie}

We would like to give some simple applications of the fore-going material to the computation of stable cohomology of some finite groups of Lie type. Consider the group $\mathrm{GL}_n (\mathbb{F}_q)$ of automorphisms of $\mathbb{F}_q^n$ where $\mathbb{F}_q$ is the finite field with $q$ elements.

\

We assume that $p$ and $q$ are coprime and odd. Then by \cite{A-M}, Theorem VII.4, 4.1 and Corollary 4.3, the $p$-Sylow subgroup $\mathrm{Syl}_p (\mathrm{GL}_n (\mathbb{F}_q)$ is a product of groups of the form: 
\[
\ZZ/p^{r} \wr \ZZ/p \dots \wr \ZZ/p \, .
\]
Hence Theorem \ref{tMain} shows

\begin{theorem}\xlabel{tFiniteGL}
Let $p$ and $q$ two odd primes with $(p, \: q) =1$, and let $n$ be an integer. Then the stable cohomology of $H^*_{\mathrm{s}}(\mathrm{GL}_n (\mathbb{F}_q), \: \ZZ/p)$ is detected by abelian subgroups (over any base field $k$). In particular, the unramified cohomology $H^*_{\mathrm{nr}}(\mathrm{GL}_n (\mathbb{F}_q), \: \ZZ/p)$ is trivial. The same statements hold for $\mathrm{SL}_n (\mathbb{F}_q)$ (the $p$-Sylow is the same as that of $\mathrm{GL}_n (\mathbb{F}_q)$).
\end{theorem}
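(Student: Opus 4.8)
The plan is to reduce the assertion to the detection result of Theorem~\ref{tMain} for the class $\mathcal{C}_p(\mathcal{D})$, by passing to a $p$-Sylow subgroup. First I would take $\mathcal{D}$ to be the class of all finite cyclic $p$-groups $\ZZ/p^r$. This $\mathcal{D}$ satisfies the two hypotheses of Definition~\ref{dClassCp}: each $\ZZ/p^r$ is abelian, so its generically free linear quotients are (stably) rational (for instance by Fischer's theorem on abelian group quotients), giving condition~(1); and the centralizer of any element of $\ZZ/p^r$ is all of $\ZZ/p^r$, giving condition~(2) trivially. By the structure theorem cited in the excerpt (\cite{A-M}, Theorem~VII.4.1 and Corollary~4.3), the $p$-Sylow subgroup $P = \mathrm{Syl}_p(\mathrm{GL}_n(\mathbb{F}_q))$ is a finite direct product of iterated wreath products $\ZZ/p^r \wr \ZZ/p \wr \dots \wr \ZZ/p$. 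Each factor is obtained from the base group $\ZZ/p^r \in \mathcal{D}$ by iterating operation~(a) of Definition~\ref{dClassCp} (this matches the left-normed convention $H \mapsto H \wr \ZZ/p$ in which these Sylow subgroups are built), and the direct product is then produced by operation~(b). Hence $P \in \mathcal{C}_p(\mathcal{D})$, and Theorem~\ref{tMain} shows that $H^*_{\mathrm{s}}(P, \ZZ/p)$ is detected by abelian subgroups.

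Next I would propagate detection from $P$ up to $G = \mathrm{GL}_n(\mathbb{F}_q)$. The restriction and transfer maps are induced by the finite cover $V/P \to V/G$ of quotient varieties, which is finite \'etale over the locus of free action; they therefore descend to stable cohomology, and the composite $\mathrm{cor}^G_P \circ \mathrm{res}^G_P$ is multiplication by $[G:P]$, which is prime to $p$ and hence an isomorphism on $\ZZ/p$-coefficients. Consequently $\mathrm{res}^G_P \colon H^*_{\mathrm{s}}(G, \ZZ/p) \hookrightarrow H^*_{\mathrm{s}}(P, \ZZ/p)$ is injective. Now a nonzero class $\alpha \in H^*_{\mathrm{s}}(G, \ZZ/p)$ has nonzero image $\mathrm{res}^G_P(\alpha)$, which by the previous paragraph is nonzero on some abelian subgroup $A \le P$; since $A$ is then an abelian subgroup of $G$ and $\mathrm{res}^G_A(\alpha) = \mathrm{res}^P_A(\mathrm{res}^G_P(\alpha)) \neq 0$, the class $\alpha$ is detected on $A$. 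This establishes detection by abelian subgroups for $G$.

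For the vanishing of unramified cohomology I would use that $H^*_{\mathrm{nr}}(G, \ZZ/p) \subseteq H^*_{\mathrm{s}}(G, \ZZ/p)$ and that restriction to a subgroup $A$ preserves unramifiedness (pullback of an unramified class along the dominant morphism of smooth varieties $V/A \to V/G$ is again unramified). Every abelian $A$ has a rational generically free quotient, so $H^{>0}_{\mathrm{nr}}(A, \ZZ/p) = 0$; thus a positive-degree unramified class of $G$ restricts to zero on all abelian subgroups and, by the detection just proven, must itself vanish. Finally, the case of $\mathrm{SL}_n(\mathbb{F}_q)$ follows by the identical transfer argument: as noted in the theorem its $p$-Sylow subgroup coincides with $P$, so $P \le \mathrm{SL}_n(\mathbb{F}_q)$ with index prime to $p$, and restriction again embeds $H^*_{\mathrm{s}}(\mathrm{SL}_n(\mathbb{F}_q), \ZZ/p)$ into the stable cohomology of the same group $P \in \mathcal{C}_p(\mathcal{D})$.

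The step I expect to be the main obstacle is the careful justification that the transfer formalism descends to \emph{stable} (and unramified) cohomology with the expected degree formula $\mathrm{cor} \circ \mathrm{res} = [G:P]$. This is where one must pass from the classical statement in ordinary group cohomology to the geometric setting, using that $V/P \to V/G$ is finite \'etale over the free locus so that a Gysin/transfer map of the right degree exists on the direct-limit cohomology defining $H^*_{\mathrm{s}}$; the same functoriality underlies the claim that restriction preserves unramified classes. Everything else is bookkeeping with the closure operations of $\mathcal{C}_p(\mathcal{D})$ and with the cited Sylow structure, and all the constructions involved are geometric and functorial, so they are insensitive to the choice of base field $k$.
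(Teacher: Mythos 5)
Your proof is correct and follows essentially the same route as the paper: the paper simply cites the Sylow structure $\ZZ/p^{r}\wr\ZZ/p\wr\dots\wr\ZZ/p$ from \cite{A-M} and invokes Theorem \ref{tMain}, leaving the choice of $\mathcal{D}$ and the transfer reduction to the Sylow subgroup implicit. You have merely spelled out those standard details (taking $\mathcal{D}$ to be the cyclic $p$-groups, the $\mathrm{cor}\circ\mathrm{res}=[G:P]$ argument, and the preservation of unramifiedness under restriction), all of which are sound.
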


One should compare this to the results obtained in \cite{BPT}. We hope to give a more exhaustive treatment of other classes of finite groups of Lie type elsewhere.

\end{document}